\providecommand{\keywords}[1]{\textbf{Keywords} #1}
\newtheorem{remark}{Remark}
\newtheorem{result}{Result}
\newtheorem{theorem}{Theorem}
\begin{document}

\title{Subgrid multiscale stabilized finite element analysis of  non-Newtonian Power-law model fully coupled with Advection-Diffusion-Reaction equations}

\author{Manisha Chowdhury, B.V. Rathish Kumar\thanks{ Email addresses: chowdhurymanisha8@gmail.com(M.Chowdhury); drbvrk11@gmail.com (B.V.R. Kumar)   } }
      
\date{Indian Institute of Technology Kanpur \\ Kanpur, Uttar Pradesh, India}

\maketitle

\begin{abstract}
This article presents stability and convergence analyses of subgrid multiscale stabilized finite element formulation of non-Newtonian power-law fluid flow model strongly coupled with variable coefficients Advection-Diffusion-Reaction ($VADR$) equation. Considering the highly non-linear viscosity coefficient as solute concentration dependent makes the coupling two way. The stabilized formulation of the transient coupled system is developed based upon time dependent subscales, which ensures inherent consistency of the method. The proposed algebraic expressions of the stabilization parameters appropriately shape up the apriori and aposteriori error estimates. Both the shear thinning and shear thickening properties, indicated by different power-law indices are properly highlighted in theoretical derivations as well as in numerical validations. The numerical experiments carried out for different combinations of small and large Reynolds numbers and power-law indices establish far better performance of time dependent $ASGS$ method in approximating the solution of this coupled system for all the cases over the other well known stabilized finite element methods.
\end{abstract}

\keywords{Navier-Stokes equation $\cdot$  Advection-Diffusion-Reaction equation $\cdot$  Subgrid multiscale stabilized method $\cdot$  Apriori error estimation $\cdot$ Aposteriori error estimation }

\section{Introduction}
Importance of studying transportation of non-Newtonian fluids lies in its wide range of applications in the fields of chemical engineering (\cite{2}, \cite{18}, \cite{19}), biomedical engineering (\cite{17},\cite{22},\cite{29},\cite{32},\cite{37}), fluid mechanics (\cite{30},\cite{31},\cite{33},\cite{36}), petroleum engineering \cite{38}, environmental sciences etc. in modeling various contemporary real life based problems. Non-Newtonian fluid flow obeying power-law coupled with $VADR$ equation plays an important role in effectively designing these complex phenomena mathematically. Several complex properties of non-Newtonian fluids which includes shear thinning, shear thickening etc. expressed in terms of its highly non-linear viscosity coefficient makes it tremendously challenging to handle. Therefore being an attraction of study numerous simulation based research works (\cite{1},\cite{21},\cite{23}-\cite{26}) have been carried out with this mathematical model of non-Newtonian fluids.\vspace{1mm}\\
In spite of such a huge amount of numerical studies, in literature there is a shortage of mathematical analysis of the numerical methods employed to find approximate solutions of non-Newtonian fluid flow model. $Barrett$ and $Liu$ have found out energy type error bounds for non-Newtonian power-law model using the standard $Galerkin$ finite element method in \cite{7} and \cite{8}. The $Galerkin$ method though an excellent interpolation scheme, suffers from various numerical instabilities in the case of convection dominated flows admitting high Reynolds number ($Re$) and for few easy to implement finite element spaces which are incapable in satisfying compatible $inf$-$sup$ condition for velocity-pressure approximate spaces. Contrarily the stabilized finite element methods, studied in \cite{5}, \cite{6},\cite{12}, \cite{13}, \cite{28}, perform extremely well in approximating the solution of Newtonian fluids for any pair of velocity pressure spaces circumventing the $inf$-$sup$ condition and even for convection dominated flows (\cite{11},\cite{20}, \cite{27}). Use of stabilized methods to solve non-Newtonian fluid flow problems is not new, for instance $Aguuirre$ $et.$ $al.$ in \cite{3} has studied lid driven cavity problem for both steady and unsteady power-law fluid flows at high $Re$ using subgrid multiscale stabilized finite element method for quasi-static subscales (where the subscales are not time dependent).  Adequately well performance of the $VMS$ type stabilized FEM motivates us to study its time dependent approach, introduced for Navier-Stokes equations in \cite{15} for coupled non-Newtonian power-law-transport system. This method begins with an additive decomposition of the continuous solution into coarse or resolvable and fine or unresolvable scales, also known as subgrid scales. The resolvable scales are chosen to be numerically computable and the subgrid scales are analytically expressed in terms of the coarse  scales. Finally the elimination of the subgrid scales results into the stabilized formulation. This mathematical nesting of coarse and fine scales makes the method inherently consistent, accurate and stable for lesser refined spatial discretizations. It is noteworthy to mention here that the consideration of time dependent subgrid scales, also known as dynamic subscales ensures stability and more consistency of the resultant method while spatial grid size ($h$) is chosen comparatively larger than the time step ($dt$). Although a sufficient condition, $dt> Ch^2$ (for a positive constant $C$), introduced in \cite{9}, has made it possible to overcome instabilities of subgrid method with quasi-static subscales employed on some incompressible transient problem, but the condition may not be satisfied \cite{5} for anisotropic discretization of space and time. 
This paper contributes in studies of stability and convergence properties of the subgrid multiscale stabilized formulation with dynamic subscales for a significant combination of concentration dependent viscosity coefficient, also expressing shear thinning and shear thickening non-Newtonian behavior  with variable diffusion coefficients in the transport equation along with prominent numerical validation results. As per knowledge there is no such mathematical analysis based study for transportation of non-Newtonian fluid flow problem using a stabilized method available in literature.\vspace{1mm}\\
In this paper the stabilized finite element solution is shown bounded by the given data with respect to that standard full-norm based on which optimal order of convergence has been reached for implicit time discretization scheme in the discrete time setting. This uniformity helps in carrying out effective numerical validation of the theoretically established results. Both the shear thinning and shear thickening properties of the viscosity are considered and estimated separately during the derivations of both apriori and aposteriori error estimates. Standard expressions for stabilization parameters have been considered from literature \cite{10},\cite{15}. In numerical experiment part  we have verified the theoretically evaluated rate of convergence result as well as compared the performance of time dependent $ASGS$ method with three other methods which include the standard $Galerkin$ approach, the $GLS$ method and the $ASGS$ method with quasi-static subscales. All these methods, introduced earlier for different studies ($Galerkin$ method for power law model \cite{7}, $GLS$ method for Navier-Stokes equations \cite{16}, quasi-static $ASGS$ method for power law model \cite{3}) extended here for this coupled system for numerical validation purpose. We have carried out this comparison study for different combinations of power-law indices indicating shear thinning, Newtonian and shear thickening cases and concentration dependency of viscosity coefficient to cover almost all possible aspects of the coupled system. Whereas for weakly coupled system of equations (when viscosity is independent of concentration of the solute mass) time dependent $ASGS$ method has come out as the best performing numerical method among the stabilized finite element methods in approximating all the variables even for high $Re$ such as 50000, for strong coupling too this method admits optimal order of convergence. \vspace{1mm}\\
This paper is organized in this way: section 2 introduces the model problem and its variational formulation. Section 3 carries out the derivation of the stabilized multiscale finite element formulation and its stability analysis. Next section finds out apriori and aposteriori error estimates and the last section presents numerical results.

\section{Continuous problem}
\subsection{The model problem}
This section starts with presenting unsteady non-Newtonian fluid flow model followed by transient advection-diffusion-reaction equation with variable coefficients. Let $\Omega \subset$ $\mathbb{R}^d$ ($d$=2,3) be an open flow domain bounded by the boundary $\partial \Omega$. For simplifying the theoretical analysis we have particularly considered two dimensional model and it's extension to study three dimensional problem is straight forward. \vspace{1mm}\\
Now the governing equation of fluid flow phenomena  \cite{39} is to find velocities $\textbf{u}=(u_1,u_2)$: $\Omega \times (0,T)$ $\rightarrow \mathbb{R}^2$ and pressure $p$: $\Omega \times (0,T)$ $\rightarrow \mathbb{R}$ such that

\begin{equation}
\begin{split}
\rho(\frac{\partial u_1}{\partial t}+ u_1 \frac{\partial u_1}{\partial x}+ u_2 \frac{\partial u_1}{\partial y}) & =- \frac{\partial p}{\partial x}+ \frac{\partial \tau_{11}}{\partial x} + \frac{\partial \tau_{12}}{\partial y} +f_1 \hspace{2mm} in \hspace{2mm} \Omega \times (0,T)\\
\rho(\frac{\partial u_2}{\partial t}+ u_1 \frac{\partial u_2}{\partial x}+ u_2 \frac{\partial u_2}{\partial y}) & =- \frac{\partial p}{\partial y}+ \frac{\partial \tau_{12}}{\partial x} + \frac{\partial \tau_{22}}{\partial y} +f_2 \hspace{2mm} in \hspace{2mm} \Omega \times (0,T)\\
\frac{\partial u_1}{\partial x} + \frac{\partial u_2}{\partial y} & =0 \hspace{2mm} in \hspace{2mm} \Omega \times (0,T)
\end{split}
\end{equation}
where $\rho$ is density of the fluid, $f_1,f_2$ are the body forces and the components of the shear stress tensor obey the Ostwald-De Waele (Power-law) model as follows:
\begin{equation}
\tau_{ij}= 2 \eta D_{ij}= \eta (\frac{\partial u_i}{\partial x_j}+\frac{\partial u_j}{\partial x_i}) \hspace{2mm} for \hspace{2mm} i,j=1,2
\end{equation}
and $(x_1,x_2)$ indicates $(x,y)$ in this notation. Here $D_{ij}$ denotes component of deformation-rate for each $i,j=1,2$ and $\eta$ is the viscosity which behaves for non-Newtonian power-law model \cite{21} in the following form
\begin{equation}
\eta = \eta(c,\textbf{u})= K e^{Bc} \{ 2 (\frac{\partial u_1}{\partial x})^2 + 2 (\frac{\partial u_2}{\partial y})^2 + (\frac{\partial u_1}{\partial y}+ \frac{\partial u_2}{\partial x})^2 \}^{\frac{m-1}{2}}
\end{equation}
where $K$ is the consistency factor, $B$ is a dimension less constant, $m$ is the power-law index, $\textbf{u}$=$(u_1,u_2)$ is the velocity vector and $c$ denotes the concentration of the solute, transportation of which is modeled through the following $VADR$ equation.
\begin{equation}
\frac{\partial c}{\partial t}- \bigtriangledown \cdot \tilde{\bigtriangledown} c + \textbf{u} \cdot \bigtriangledown c + \alpha c  = g \hspace{2mm} in \hspace{2mm} \Omega \times (0,T) \\
\end{equation}
where the notation $\tilde{\bigtriangledown}: = (D_1 \frac{\partial}{\partial x}, D_2 \frac{\partial}{\partial y})$ \\
$D_1,D_2$ are the variable coefficients of diffusion, $\alpha$ is reaction coefficient and $g$ is the source of solute mass. \vspace{1mm}\\
Let us consider homogeneous Dirichlet boundary conditions for both (1) and (4) and the initial conditions are respectively
\begin{equation}
\begin{split}
\textbf{u}= \textbf{0}, c & =0 \hspace{2mm} on \hspace{2mm} \partial \Omega \times (0,T)\\
\textbf{u}= \textbf{u}_0, c & = c_0 \hspace{2mm} at \hspace{2mm} t=0
\end{split}
\end{equation}
In a compact form we can express the above coupled system in the following operator form
\begin{equation}
M \partial_t \textbf{U} + \mathcal{L}(\textbf{u}, \eta(c,\textbf{u}); \textbf{U})= \textbf{F}
\end{equation}
where $\textbf{U}$ denotes the triplet $(\textbf{u},p,c)$ and $M$, a matrix= $diag(\rho,\rho,0,1)$, $\partial_t \textbf{U}=[\frac{\partial \textbf{u}}{\partial t},\frac{\partial p}{\partial t} \frac{\partial c}{\partial t}]^T$ and $\textbf{F}=[f_1,f_2,0,g]^T$\\

\[
\mathcal{L} (\textbf{u}, \eta(c, \textbf{u}); \textbf{U})=
  \begin{bmatrix}
  \rho(u_1 \frac{\partial u_1}{\partial x}+ u_2 \frac{\partial u_1}{\partial y}) + \frac{\partial p}{\partial x} - \{ \frac{\partial}{\partial x} (2 \eta \frac{\partial u_1}{\partial x})+  \frac{\partial}{\partial y} ( \eta (\frac{\partial u_2}{\partial x} + \frac{\partial u_1}{\partial y})\} \\
  \rho(u_1 \frac{\partial u_2}{\partial x}+ u_2 \frac{\partial u_2}{\partial y}) + \frac{\partial p}{\partial y} - \{ \frac{\partial}{\partial x} ( \eta (\frac{\partial u_2}{\partial x} + \frac{\partial u_1}{\partial y})+  \frac{\partial}{\partial y} (2 \eta \frac{\partial u_2}{\partial y})\} \\
    \bigtriangledown \cdot \textbf{u} \\
    - \bigtriangledown \cdot \tilde{\bigtriangledown} c + \textbf{u} \cdot \bigtriangledown c + \alpha c 
  \end{bmatrix}
\]
Now adjoint $\mathcal{L}^*$ of $\mathcal{L}$ is
\[
\mathcal{L}^* (\textbf{u}, \eta(c, \textbf{u}); \textbf{U})=
  \begin{bmatrix}
  -\rho(u_1 \frac{\partial u_1}{\partial x}+ u_2 \frac{\partial u_1}{\partial y}) - \frac{\partial p}{\partial x} - \{ \frac{\partial}{\partial x} (2 \eta \frac{\partial u_1}{\partial x})+  \frac{\partial}{\partial y} ( \eta (\frac{\partial u_2}{\partial x} + \frac{\partial u_1}{\partial y})\} \\
  -\rho(u_1 \frac{\partial u_2}{\partial x}+ u_2 \frac{\partial u_2}{\partial y}) - \frac{\partial p}{\partial y} - \{ \frac{\partial}{\partial x} ( \eta (\frac{\partial u_2}{\partial x} + \frac{\partial u_1}{\partial y})+  \frac{\partial}{\partial y} (2 \eta \frac{\partial u_2}{\partial y})\} \\
    -\bigtriangledown \cdot \textbf{u} \\
    - \bigtriangledown \cdot \tilde{\bigtriangledown} c - \textbf{u} \cdot \bigtriangledown c + \alpha c 
  \end{bmatrix}
\]
Now we introduce few suitable assumptions on coefficients of the coupled system in the following:\vspace{1mm}\\
\textbf{(i)} $D_1= D_1((x,y),t) \in C^0(\mathbb{R}^2 \times (0,T);\mathbb{R})$ and $D_2= D_2((x,y),t) \in C^0(\mathbb{R}^2 \times (0,T);\mathbb{R})$ where $ C^0(\mathbb{R}^2\times (0,T);\mathbb{R})$ is the space of real valued continuous function defined on $\mathbb{R}^2$ for fixed $t \in (0,T)$. Both are bounded quantity that is we can find lower and upper bounds for both of them. \vspace{2mm} \\
\textbf{(ii)} $\rho$ and $\alpha$ are positive constants. \vspace{2mm}\\
\textbf{(iii)} The spaces of continuous solution $(\textbf{u},p,c)$ are assumed as: \\ $\textbf{u} \in L^{\infty}(0,T; (H^2(\Omega))^2)\bigcap C^{0}(0,T; (H_0^1(\Omega))^2)$ and \\
$p \in L^{\infty}(0,T;H^1(\Omega))\bigcap C^{0}(0,T;L^2_0(\Omega)) $,
 $c \in L^{\infty}(0,T;H^2(\Omega))\bigcap C^0(0,T;H^1_0(\Omega))$ \vspace{1mm}\\
\textbf{(iv)} Additional assumptions on exact velocity and concentration solutions are $u_1$, $u_2$, $c$ and all the first order derivatives of velocities i.e. $\frac{\partial u_1}{\partial x}$, $\frac{\partial u_1}{\partial y}$, $\frac{\partial u_2}{\partial x}$, $\frac{\partial u_2}{\partial y}$ are taken to be bounded functions on $\Omega$ for each $t \in (0,T)$ as well as these time derivatives, $\textbf{u}_{tt}(t),\textbf{u}_{ttt}(t), c_{tt}(t),c_{ttt}(t)$ are assumed to be bounded for a.e. $t \in J=(0,T)$.

\subsection{Variational formulation}
Assuming $f_1,f_2,g \in L^2(0,T; L^2(\Omega))$ the appropriate spaces to derive the weak formulation are $V=H^1_0(\Omega) $ and $Q=L^2_0(\Omega)$. Now denoting the product space $V^2 \times Q \times V$ by $ \bar{\textbf{V}}$ the variational formulation of (6) is to find  \textbf{U}(t)= (\textbf{u}(t),p(t),c(t)) $ \in \bar{\textbf{V}}$ such that $\forall$ \textbf{V}=(\textbf{v},q,d) $\in  \bar{\textbf{V}}$ and for $a.e.$ $t \in J$
\begin{equation}
\begin{split}
(M\partial_t \textbf{U},\textbf{V}) + B(\textbf{u},\eta(c,\textbf{u}); \textbf{U}, \textbf{V}) & = L(\textbf{V})   \hspace{2 mm} \forall \textbf{V} \in \bar{\textbf{V}}
\end{split}
\end{equation} 
where $(M \partial_t \textbf{U}, \textbf{V})= \rho \int_{\Omega} \frac{\partial u_1}{\partial t} v_1 + \rho \int_{\Omega} \frac{\partial u_2}{\partial t} v_2 +\int_{\Omega} \frac{\partial c}{\partial t} d$ \vspace{1mm}\\
$B(\textbf{u},\eta(c,\textbf{u}); \textbf{U}, \textbf{V})$= $c(\textbf{u},\textbf{u},\textbf{v}) + a_{PL}(\eta(c,\textbf{u}); \textbf{u},\textbf{v})- b(\textbf{v},p)
+ b(\textbf{u},q)+ a_{LT}(c,d)+ a_{NLT}(\textbf{u},c,d)$ and  the linear functional $L(\textbf{V})= l_{PL}(\textbf{v})+ l_{LT}(d)$.\vspace{1mm}\\
where the notations are defined in the following:\\
$c(\textbf{u},\textbf{v},\textbf{w})=\rho \int_{\Omega} ((\textbf{u} \cdot \bigtriangledown)\textbf{v})\cdot \textbf{w}+ \frac{\rho}{2} \int_{\Omega} (\bigtriangledown \cdot \textbf{u})\textbf{v} \cdot \textbf{w} $ and $b(\textbf{v},q)= \int_{\Omega} (\bigtriangledown \cdot \textbf{v}) q$\vspace{1 mm}\\
$a_{PL}(\eta(c,\textbf{u}); \textbf{u},\textbf{v})$= $\int_{\Omega}K e^{Bc} \{ 2 (\frac{\partial u_1}{\partial x})^2 + 2 (\frac{\partial u_2}{\partial y})^2 + (\frac{\partial u_1}{\partial y}+ \frac{\partial u_2}{\partial x})^2 \}^{\frac{m-1}{2}}(2 \frac{\partial u_1}{\partial x} \frac{\partial v_1}{\partial x} + \frac{\partial u_2}{\partial x} \frac{\partial v_1}{\partial y}+ \frac{\partial u_1}{\partial y} \frac{\partial v_2}{\partial x}+ \frac{\partial u_1}{\partial y} \frac{\partial v_1}{\partial y}+ \frac{\partial u_2}{\partial x} \frac{\partial v_2}{\partial x}+ 2 \frac{\partial u_2}{\partial y} \frac{\partial v_2}{\partial y})$ \vspace{1 mm}\\
 $a_{LT}(c,d) = \int_{\Omega} \tilde{\bigtriangledown}c \cdot \bigtriangledown d  + \alpha\int_{\Omega}cd $ and
 $a_{NLT}(\textbf{u},c,d)= \int_{\Omega} d \textbf{u} \cdot \bigtriangledown c$\vspace{1 mm} \\
 $l_{PL} (\textbf{v})= \int_{\Omega} (f_1 v_1+ f_2 v_2)$ and  $l_{LT}(d)= \int_{\Omega} gd$ \vspace{1 mm} \\
The addition of the incompressibility condition into the trilinear term $c(\cdot,\cdot,\cdot)$ makes it equivalent to it's original form obtained from non-linear convective term in (1). This modification provides the following important properties of $c(\cdot,\cdot,\cdot)$: \vspace{1mm}\\
(\textbf{a}) for each $\textbf{u} \in V \times V$, $c(\textbf{u},\textbf{v},\textbf{v})=0$ \hspace{1mm} $\forall$ $\textbf{v} \in V \times V$ \vspace{1mm}\\
(\textbf{b}) for \textbf{u}, \textbf{v}, \textbf{w} $\in V \times V$
\begin{equation}
  c(\textbf{u},\textbf{v},\textbf{w})\leq \begin{cases}
    C \|\textbf{u}\|_1 \|\textbf{v}\|_1 \|\textbf{w}\|_1 & \\
    C \|\textbf{u}\|_0 \|\textbf{v}\|_2 \|\textbf{w}\|_1 & \\
    C \|\textbf{u}\|_2 \|\textbf{v}\|_1 \|\textbf{w}\|_0 & \\
    C \|\textbf{u}\|_0 \|\textbf{v}\|_1 \|\textbf{w}\|_{L^{\infty}(\Omega)} 
  \end{cases}
\end{equation}
where $C$ is a constant and $\| \cdot \|_i$ for i=0,1,2 denote the standard $L^2, H^1, H^2$ full norms respectively.  From now onward for simplicity we use $\| \cdot \|$ instead of $\| \cdot \|_0$ to denote $L^2(\Omega)$ norm.

\section{Discrete formulation}
\subsection{Space and time discretizations}
Let the domain $\Omega$ be discretized into $n_{el}$ numbers of subdomains $\Omega_k$ for k=1,2,..., $n_{el}$. Let $h_k$ be the diameter of each subdomain $\Omega_k$ with $h$= $\underset{k=1,2,...n_{el}}{max} h_k$.
Let the  finite dimensional spaces  $V_h \subset V$ and $Q_h \subset Q$ be considered as: \vspace{1mm}\\
$V_h= \{ v \in V: v(\Omega_k)= \mathcal{P}^l(\Omega_k)\} $ and 
$Q_h= \{ q \in Q : q(\Omega_k)= \mathcal{P}^{l-1}(\Omega_k)\}$ \vspace{1 mm}\\
where  $\mathcal{P}^l(\Omega_k)$ denotes complete polynomial upto order $l$ over each $\Omega_k$ for k=1,2,..., $n_{el}$ along with this inclusion assumption $\bigtriangledown \cdot V_h \subset Q_h$. Considering a likewise notation $\bar{\textbf{V}}_h$ for denoting the product space $V_h \times V_h \times Q_h \times V_h$, the standard \textbf{Galerkin finite element formulation} for the variational form (7) is to find $\textbf{U}_h(t) $= $(\textbf{u}_h(t),p_h(t),c_h(t))$ $ \in \bar{\textbf{V}}_h$ such that $\forall$ $\textbf{V}_h=(\textbf{v}_h,q_h,d_h)$ $\in \bar{\textbf{V}}_h$ and for $a.e.$ $t \in J$
\begin{equation}
(M\partial_t \textbf{U}_h,\textbf{V}_h) + B(\textbf{u}_h, \eta(c_h,\textbf{u}_h); \textbf{U}_h, \textbf{V}_h) = L(\textbf{V}_h)   
\end{equation}
where $(M\partial_t \textbf{U}_h,\textbf{V}_h)$= $ \rho (\frac{\partial u_{1h}}{\partial t}, v_{1h})+ \rho (\frac{\partial u_{2h}}{\partial t}, v_{2h})+ (\frac{\partial c_h}{\partial t}, d_h)$ \vspace{2mm}\\
 $B(\textbf{u}_h, \eta(c_h,\textbf{u}_h); \textbf{U}_h, \textbf{V}_h)$ = $ c(\textbf{u}_h,\textbf{u}_h,\textbf{v}_h)+ a_{PL}(\eta(c_h, \textbf{u}_h);\textbf{u}_h,\textbf{v}_h)- b(\textbf{v}_h,p_h)+ b(\textbf{u}_h,q_h)+ a_{LT}(c_h,d_h)+ a_{NLT}(\textbf{u}_h,c_h,d_h)$ \vspace{1mm}\\
and  $L(\textbf{V}_h)= l_{PL}(\textbf{v}_h)+ l_{LT}(d_h) $ \\
In addition let us consider the initial conditions $(\textbf{u}_h, \textbf{v}_h)\mid_{t=0}=(\textbf{u}_0, \textbf{v}_h)$ $\forall \textbf{v}_h \in V_h \times V_h $ and
  $(c_h,d_h)\mid_{t=0}= (c_0,d_h)$ $\forall d_h \in V_h$. \vspace{1mm}\\
For introducing time discretization we mention here few notations: let us divide the time $T$ into $N$ number of steps and the $n^{th}$ time step is $t_n$= $ndt$ where $dt=\frac{T}{N}$. Now for given $\theta \in [0,1]$ and any function $f: \Omega \times (0,T)$ $\rightarrow \mathbb{R}$, it's approximations are of the following form
\begin{equation}
\begin{split}
f^n & = f(\cdot , t_n) \hspace{4 mm} for \hspace{2 mm} 0 \leq n \leq N\\
f^{n,\theta} &= \frac{1}{2} (1 + \theta) f^{(n+1)} + \frac{1}{2} (1- \theta) f^n \hspace{4mm} for \hspace{2mm} 0\leq n \leq N-1
\end{split}
\end{equation}
This discretization formula becomes $Crank$-$Nicolson$ (second order convergent) for $\theta=0$ and $backward$ $Euler$ (first order convergent) for $\theta=1$. \vspace{1mm}\\
Let $\textbf{u}^{n,\theta},p^{n,\theta},c^{n,\theta}$ be approximations of $\textbf{u}(\textbf{x},t^{n,\theta}), p(\textbf{x},t^{n,\theta}),c(\textbf{x},t^{n,\theta})$ respectively. Now by Taylor series expansion \cite{34},we have 
\begin{equation}
\begin{split}
\frac{u_i^{n+1}-u_i^n}{dt} & = \frac{\partial u_i}{\partial t}(\textbf{x},t^{n,\theta}) + TE_i\mid_{t=t^{n,\theta}} \hspace{5mm} \forall \textbf{x} \in \Omega \hspace{2mm} i=1,2 \\
\frac{c^{n+1}-c^n}{dt} & = \frac{\partial c}{\partial t}(\textbf{x},t^{n,\theta}) + TE_3\mid_{t=t^{n,\theta}} \hspace{5mm} \forall \textbf{x} \in \Omega
\end{split}
\end{equation}
where the truncation errors $TE_i\mid_{t=t^{n,\theta}}$ $\simeq$ $TE_i^{n,\theta}$, (for i=1,2,3) depend upon the time-derivatives of the respective variables and the time step $dt$ in the following way \cite{34}:
\begin{equation}
\begin{split}
\|TE_1^{n,\theta}\| & \leq
      \begin{cases}
      C' dt \|u_{1,tt}^{n,\theta}\|_{L^{\infty}(t^n,t^{n+1},L^2)} & if \hspace{1mm} \theta=1 \\
    C'' dt^2 \|u_{1,tt}^{n,\theta}\|_{L^{\infty}(t^n,t^{n+1},L^2)} & if \hspace{1mm} \theta=0
      \end{cases}
\end{split}
\end{equation}
The above relation holds for $TE_2$ and $TE_3$ in similar manner.
Now in particular for backward Euler scheme ($\theta$=1) applying assumption $\textbf{(iv)}$ we have another property as follows:
\begin{equation}
\begin{split}
\|TE_1^{n,\theta}\| & \leq C' dt  \|u_{1,tt}^{n+1}\|_{L^{\infty}(t^n,t^{n+1},L^2)} \leq \tilde{C}' dt
\end{split}
\end{equation}
Similarly
\begin{equation}
\begin{split}
\|TE_2^{n,\theta}\| & \leq \tilde{C}' dt \hspace{2mm} and \hspace{2mm}
\|TE_3^{n,\theta}\|  \leq \tilde{C}' dt \\
\end{split}
\end{equation}
After introducing all the required definitions the fully discrete Galerkin finite element formulation is to find $\textbf{U}_h^{n+1}= (\textbf{u}_h^{n+1},p_h^{n+1},c_h^{n+1}) \in \bar{\textbf{V}}_h$ for given $\textbf{U}_h^n = (\textbf{u}_h^n,p_h^n,c_h^n)\in \bar{\textbf{V}}_h$ such that $\forall \hspace{1mm} \textbf{V}_h=(\textbf{v}_h,q_h,d_h) \in \bar{\textbf{V}}_h $
\begin{equation}
(M\frac{(\textbf{U}_h^{n+1}-\textbf{U}_h^n)}{dt}, \textbf{V}_h)+ B(\textbf{u}_h^{n}, \eta(c_h^{n},\textbf{u}_h^{n} ) ;\textbf{U}_h^{n,\theta}, \textbf{V}_h) = L(\textbf{V}_h) + (\textbf{TE}^{n,\theta},\textbf{V}_h)  
\end{equation}

\subsection{ Multiscale stabilized finite element formulation}
It is well known that the Galerkin formulation (9) suffers from lack of numerical stability in the case of convection dominated flows and while the velocity-pressure spaces fail to satisfy the $inf$-$sup$ compatibility condition. The $subgrid$ $multiscale$ stabilized formulation has been introduced to overcome these instabilities. This method starts with decomposition of the continuous solution $\textbf{U}$ into the known finite element solution $\textbf{U}_h$ and an unknown subgrid scale component $\tilde{\textbf{U}}$ such that $\textbf{U}=\textbf{U}_h + \tilde{\textbf{U}}$ where $\tilde{\textbf{U}} \in \tilde{\textbf{V}}$, a subspace of $\textbf{V}$ completing $\textbf{V}_h$ in $\textbf{V}$. Applying alike decomposition on the test functions, the weak formulation (7) can be equivalently expressed as
\begin{equation}
\begin{split}
(M \partial_t \textbf{U}_h+M \partial_t \tilde{\textbf{U}},\textbf{V}_h)+B(\textbf{u}_h, \eta(c_h,\textbf{u}_h); \textbf{U}_h+ \tilde{\textbf{U}}, \textbf{V}_h) & = L(\textbf{V}_h) \\
(M \partial_t \textbf{U}_h+M \partial_t \tilde{\textbf{U}},\tilde{\textbf{V}})+B(\textbf{u}_h, \eta(c_h,\textbf{u}_h); \textbf{U}_h +  \tilde{\textbf{U}}, \tilde{\textbf{V}}) & = L(\tilde{\textbf{V}}) \\
\end{split}
\end{equation}
Now the aim is to express the unknown subgrid scale $\tilde{\textbf{U}}$ in terms of $\textbf{U}_h$. For this purpose on integrating the second sub-equation and approximating the differential operator $\mathcal{L}$ by an algebraic operator $\tau_k$ we have over each sub domain $\Omega_k$
\begin{equation}
M \partial_t \tilde{\textbf{U}} + \tau_k^{-1} \tilde{\textbf{U}} =\textbf{R}:= \textbf{F}-M \partial_t \textbf{U}_h- \mathcal{L}(\textbf{u}_h, \eta(c_h, \textbf{u}_h); \textbf{U}_h)
\end{equation}
where $\textbf{R}$ is the residual vector. Applying time discretization rule, in particular backward Euler scheme on the above equation we have
\begin{equation}
\tilde{\textbf{U}} = \tau_k'(\textbf{R}+ \textbf{d})
\end{equation}
where the stabilization parameter $\tau_k$ = $diag(\tau_{1k},\tau_{1k},\tau_{2k}, \tau_{3k})$ and $\tau_k'$= $(\frac{1}{dt}M+ \tau_k^{-1})^{-1}$= $diag(\frac{\tau_{1k} dt}{dt+ \rho \tau_{1k}}, \frac{\tau_{1k} dt}{dt+ \rho \tau_{1k}}, \tau_{2k}, \frac{\tau_{3k} dt}{dt+  \tau_{3k}})= diag(\tau_{1k}', \tau_{1k}', \tau_{2k}', \tau_{3k}')$ (say) and $\textbf{d}$= $\sum_{i=1}^{n+1}(\frac{1}{dt}M\tau_k')^i(\textbf{F} -M\partial_t \textbf{U}_h - \mathcal{L}(\textbf{u}_h, \eta(c_h, \textbf{u}_h) ;\textbf{U}_h))$.\vspace{1mm}\\
Substituting all these expressions of $\tilde{\textbf{U}}$ into the first sub equation of (16) we finally have arrived at the \textbf{stabilized algebraic subgrid multiscale} ($ASGS$) finite element formulation in the following: to 
find $\textbf{U}_h(t) $= $(\textbf{u}_h(t),p_h(t),c_h(t))$ $ \in \bar{\textbf{V}}_h$ such that $\forall$ $\textbf{V}_h=(\textbf{v}_h,q_h,d_h)$ $\in \bar{\textbf{V}}_h$ and for $a.e.$ $t \in J$
\begin{equation}
(M\partial_t \textbf{U}_h,\textbf{V}_h) + B_{ASGS}(\textbf{u}_h, \eta(c_h,\textbf{u}_h) ; \textbf{U}_h, \textbf{V}_h)  = L_{ASGS}(\textbf{V}_h)  
\end{equation}
where $B_{ASGS}(\textbf{u}_h,  \eta(c_h,\textbf{u}_h) ;\textbf{U}_h, \textbf{V}_h)= B(\textbf{u}_h,  \eta(c_h,\textbf{u}_h); \textbf{U}_h, \textbf{V}_h)+ \sum_{k=1}^{n_{el}} (\tau_k'(M \\
\partial_t \textbf{U}_h + \mathcal{L}(\textbf{u}_h, \eta(c_h,\textbf{u}_h) ;\textbf{U}_h)-\textbf{d}), -\mathcal{L}^*(\textbf{u}_h, \eta(c_h,\textbf{u}_h);\textbf{V}_h))_{\Omega_k}- \sum_{k=1}^{n_{el}}((I-\tau_k^{-1}\tau_k') \\(M\partial_t \textbf{U}_h + \mathcal{L}(\textbf{u}_h, \eta(c_h,\textbf{u}_h);\textbf{U}_h)), \textbf{V}_h)_{\Omega_k} 
-\sum_{k=1}^{n_{el}} (\tau_k^{-1}\tau_k' \textbf{d}, \textbf{V}_h)_{\Omega_k}$ \vspace{2 mm}\\
$L_{ASGS}(\textbf{V}_h)= L(\textbf{V}_h)+ \sum_{k=1}^{n_{el}}(\tau_k' \textbf{F}, -\mathcal{L}^*(\textbf{u}_h, \eta(c_h,\textbf{u}_h);\textbf{V}_h))_{\Omega_k}- \sum_{k=1}^{n_{el}}((I-\tau_k^{-1}\tau_k')\textbf{F}, \textbf{V}_h)_{\Omega_k}$  \vspace{1 mm} \\
Each of the stabilization parameters \cite{3}, \cite{10} is of the following form:
\begin{equation}
\begin{split}
\tau_{1k} &= \tau_{1}= (c_1 \frac{\eta_0}{h^2}+  c_2 \frac{\rho \mid \textbf{u}_h \mid}{h})^{-1} \\
\tau_{2k} &=\tau_{2}=\frac{h^2}{c_1 \tau_{1}} \\
\tau_{3k} & = \tau_{3}= c_3(\frac{9D_m}{4h^2} + \frac{3\mid \textbf{u}_h \mid}{2h} + \alpha )^{-1}
\end{split}
\end{equation}
where $c_1,c_2,c_3$ are suitably chosen positive parameters, $\eta_0$ is total viscosity and $\mid \textbf{u}_h \mid $ of the computed velocity and $D_m$ is deduced in \cite{10}. 

\subsection{Stability analysis}
In this section we discuss about stability analysis of an equivalent system of equations to the stabilized formulation (19). From the above derivation it can be easily observed that (19) is equivalent to the following system of equations: for given $\textbf{U}_h^{n}= (\textbf{u}_h^{n},p_h^{n},c_h^{n}) \in \bar{\textbf{V}}_h$ and $\tilde{\textbf{U}}^n=(\tilde{\textbf{u}}^n, \tilde{p}^n,\tilde{c}^n) \in \tilde{\textbf{V}}$ find $\textbf{U}_h^{n+1}= (\textbf{u}_h^{n+1},p_h^{n+1},c_h^{n+1}) \in \bar{\textbf{V}}_h$ and $\tilde{\textbf{U}}^{n+1}=(\tilde{\textbf{u}}^{n+1}, \tilde{p}^{n+1},\tilde{c}^{n+1}) \in \tilde{\textbf{V}}$ such that $\forall \hspace{1mm} \textbf{V}_h=(\textbf{v}_h,q_h,d_h) \in \bar{\textbf{V}}_h $
\begin{multline}
(M \frac{\textbf{U}_h^{n+1}-\textbf{U}_h^n}{dt}, \textbf{V}_h)+B(\textbf{u}_h^n,\eta_h^n; \textbf{U}_h^{n+1}, \textbf{V}_h)- (\tilde{\textbf{U}}^{n+1}, M \partial_t \textbf{V}_h)+ (\tilde{\textbf{U}}^{n+1},\mathcal{L}(\textbf{u}_h,\eta_h;\textbf{V}_h))  = L(\textbf{V}_h)\\
M \frac{\tilde{\textbf{U}}^{n+1}-\tilde{\textbf{U}}^n}{dt} + \tau_k^{-1}\tilde{\textbf{U}}^{n+1}  = \textbf{F}- M \partial_t \textbf{U}_h^{n+1}- \mathcal{L}(\textbf{u}_h^n,\eta_h^n;\textbf{U}_h^{n+1})
\end{multline}
where the notation $\eta_h$ is an abbreviation of $\eta(c_h,\textbf{u}_h)$. For time discretization here we have particularly worked with backward Euler scheme instead of the general approach mentioned in section 3.1.
Let us mention here few assumptions needed in order to establish the stability: assuming $\textbf{u}_0,c_0 \in L^2(\Omega)$ ensures that $\|\textbf{u}_h^0\|$ and $\|c_h^0\|$ are uniformly bounded and we assume $\tilde{\textbf{u}}|_{t=0}=0$ and $\tilde{c}|_{t=0}=0$. Again the assumptions  $f_1,f_2,g \in L^2(0,T; L^2(\Omega))$ implies that for fully discrete problem the corresponding force and source terms $\{f_1^n\}, \{f_2^n\}, \{g^n\} \in l^2(L^2(\Omega))$.
\begin{theorem}
For regular partitions satisfying inverse inequalities and $(\textbf{u}_h^{n+1},p_h^{n+1},c_h^{n+1})$ and $(\tilde{\textbf{u}}^{n+1}, \tilde{p}^{n+1},\tilde{c}^{n+1})$ being solutions of (21) the following stability bounds hold for all $dt>0$
\begin{multline}
\underset{n=1,...,N-1}{max} \{ \|\textbf{u}^{n+1}_h\|^2 +\|c^{n+1}_h\|^2+ \| \tilde{\textbf{u}}^{n+1}\|^2+\|\tilde{c}^{n+1}\|^2 \} +\\
 \sum_{n=1}^{N-1}dt (C_1^s \|\nabla \textbf{u}_h^{n+1}\|^2 + C_2^s  \|\textbf{u}_h^{n+1}\|^2 + C_3^s \| \tau_{1k}^{-\frac{1}{2}} \tilde{\textbf{u}}^{n+1}\|^2)+ \\
 \sum_{n=1}^{N-1}dt (C_4^s \|\nabla c_h^{n+1}\|^2 + C_5^s  \|c_h^{n+1}\|^2 + C_6^s \| \tau_{3k}^{-\frac{1}{2}} \tilde{c}^{n+1}\|^2)\\
  \leq C_7^s \{ \sum_{n=1}^{N-1} dt( \|f_1^{n+1}\|^2 +\|f_2^{n+1}\|^2+\|c^{n+1}\|^2)+  \|\textbf{u}_h^{0}\|^2+\|c_h^0\|^2\}
\end{multline}
where $C_i^s$ are positive constants for i=1,...,7. Furthermore $\{f_1^n\}, \{f_2^n\}, \{g^n\} \in l^2(L^2(\Omega))$ and  uniformly bounded $\|\textbf{u}_h^0\|$ and $\|c_h^0\|$ implies that
\begin{center}
$\{\textbf{u}^n_h\},  \{c^n_h\} \in l^{\infty}(L^2(\Omega)) \bigcap l^2(H^1(\Omega)) $\\
$ \{\tilde{\textbf{u} }^n\}, \{\tilde{c}^n\} \in l^{\infty}(L^2(\Omega)), \{ \tau_{1k}^{-\frac{1}{2}} \tilde{\textbf{u}}^{n+1} \}, \{ \tau_{3k}^{-\frac{1}{2}} \tilde{c}^{n+1} \} \in  l^2(L^2(\Omega))$
\end{center}
\end{theorem}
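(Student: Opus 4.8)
The plan is to derive a discrete energy inequality by testing the two equations of (21) against the unknowns themselves, extracting coercivity, and closing with a discrete Grönwall argument. First I would take $\textbf{V}_h=\textbf{U}_h^{n+1}$ in the first equation of (21) and pair the second equation of (21) with $\tilde{\textbf{U}}^{n+1}$ in $L^2(\Omega)$, and then combine the two (adding them, or equivalently substituting into the first equation the expression for $\mathcal{L}(\textbf{u}_h^n,\eta_h^n;\textbf{U}_h^{n+1})$ supplied by the subscale equation). The whole reason for working with the split form (21) rather than with the eliminated form (19) is that, upon this choice, the mixed coarse--fine terms $-(\tilde{\textbf{U}}^{n+1},M\partial_t\textbf{U}_h^{n+1})$ and $(\tilde{\textbf{U}}^{n+1},\mathcal{L}(\textbf{u}_h^n,\eta_h^n;\textbf{U}_h^{n+1}))$ either cancel against the corresponding contributions of the subscale equation or reduce, after one use of the inverse inequalities below, to the subgrid dissipation plus a controllable remainder; retaining the dynamic term $M\partial_t\tilde{\textbf{U}}$ is what keeps the estimate uniform in $dt$. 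What survives on the left is then: the two discrete time-derivative terms $(M\tfrac{\textbf{U}_h^{n+1}-\textbf{U}_h^n}{dt},\textbf{U}_h^{n+1})$ and $(M\tfrac{\tilde{\textbf{U}}^{n+1}-\tilde{\textbf{U}}^n}{dt},\tilde{\textbf{U}}^{n+1})$, to which I apply $2(a-b,a)=\|a\|^2-\|b\|^2+\|a-b\|^2$; the form $B(\textbf{u}_h^n,\eta_h^n;\textbf{U}_h^{n+1},\textbf{U}_h^{n+1})$; and the subgrid dissipation $\sum_k\|\tau_k^{-1/2}\tilde{\textbf{U}}^{n+1}\|^2_{\Omega_k}$ (its $\tilde p$ part being nonnegative and simply discarded); on the right, the forcing $L(\textbf{U}_h^{n+1})+(\textbf{F},\tilde{\textbf{U}}^{n+1})$.

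\textbf{Coercivity of $B$.} Here $c(\textbf{u}_h^n,\textbf{u}_h^{n+1},\textbf{u}_h^{n+1})=0$ by property (\textbf{a}), the two pressure terms $-b(\textbf{u}_h^{n+1},p_h^{n+1})+b(\textbf{u}_h^{n+1},p_h^{n+1})$ cancel, and $a_{PL}(\eta_h^n;\textbf{u}_h^{n+1},\textbf{u}_h^{n+1})=\int_\Omega K e^{Bc_h^n}\{\,\cdots\,\}^{\frac{m-1}{2}}\big(2(\partial_x u_{1h}^{n+1})^2+2(\partial_y u_{2h}^{n+1})^2+(\partial_y u_{1h}^{n+1}+\partial_x u_{2h}^{n+1})^2\big)\ge 0$; combining this nonnegativity with Korn's inequality for $\textbf{u}_h^{n+1}\in(H_0^1(\Omega))^2$ and a lower bound on the power-law weight gives $a_{PL}\ge C_1^s\|\nabla\textbf{u}_h^{n+1}\|^2$ (the weight is bounded below by a constant when $m\ge1$, while for $m<1$ the bound, and hence $C_1^s$, depends through assumption (\textbf{iv}) on the bounded velocity first derivatives). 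By assumptions (\textbf{i})--(\textbf{ii}), $a_{LT}(c_h^{n+1},c_h^{n+1})\ge D_{\min}\|\nabla c_h^{n+1}\|^2+\alpha\|c_h^{n+1}\|^2$, and a fixed fraction of $\|\nabla\textbf{u}_h^{n+1}\|^2$ is converted to the $C_2^s\|\textbf{u}_h^{n+1}\|^2$ term by Poincaré. The only indefinite contribution is the non-skew transport convection $a_{NLT}(\textbf{u}_h^n,c_h^{n+1},c_h^{n+1})=\int_\Omega c_h^{n+1}\,\textbf{u}_h^n\cdot\nabla c_h^{n+1}$, which I bound by $\|\textbf{u}_h^n\|_{L^\infty(\Omega)}\|c_h^{n+1}\|\,\|\nabla c_h^{n+1}\|$ and Young's inequality, absorbing the gradient part into $D_{\min}\|\nabla c_h^{n+1}\|^2$ and leaving $C\|\textbf{u}_h^n\|^2_{L^\infty(\Omega)}\|c_h^{n+1}\|^2$ for the Grönwall step (the needed bound on $\|\textbf{u}_h^n\|_{L^\infty}$ coming from assumption (\textbf{iv})).

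\textbf{Right-hand side, absorption, and closing.} I would split $L(\textbf{U}_h^{n+1})=l_{PL}(\textbf{u}_h^{n+1})+l_{LT}(c_h^{n+1})$ and $(\textbf{F},\tilde{\textbf{U}}^{n+1})$ by Cauchy--Schwarz and Young into $\|f_1^{n+1}\|^2+\|f_2^{n+1}\|^2+\|g^{n+1}\|^2$ plus arbitrarily small multiples of $\|\textbf{u}_h^{n+1}\|^2$, $\|c_h^{n+1}\|^2$ and $\|\tau_k^{-1/2}\tilde{\textbf{U}}^{n+1}\|^2$ (using $\tau_{1k}^{-1}\gtrsim\eta_0h^{-2}$, $\tau_{3k}^{-1}\gtrsim\alpha$, so $\|\tilde{\textbf{u}}^{n+1}\|,\|\tilde c^{n+1}\|$ are dominated by the subgrid dissipation), and absorb them into the left. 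This is exactly where ``regular partitions satisfying inverse inequalities'' enters: any coarse--fine term $(\tilde{\textbf{U}}^{n+1},\mathcal{L}(\textbf{u}_h^n,\eta_h^n;\textbf{U}_h^{n+1}))_{\Omega_k}$ that does not simply cancel contains at most second-order derivatives of $\textbf{U}_h^{n+1}$, and one inverse inequality on $\Omega_k$ turns it into $\|\tau_k^{-1/2}\tilde{\textbf{U}}^{n+1}\|_{\Omega_k}$ times the matching coercive norm of $\textbf{U}_h^{n+1}$, precisely because the algebraic $\tau_k$ of (20) are built so that $\eta_0h^{-2},\rho|\textbf{u}_h|h^{-1}\lesssim\tau_{1k}^{-1}$ and $D_mh^{-2},|\textbf{u}_h|h^{-1},\alpha\lesssim\tau_{3k}^{-1}$; taking $c_1,c_2,c_3$ large enough (which the statement permits) keeps every absorption constant admissible. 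Setting $E^n:=\rho\|\textbf{u}_h^n\|^2+\|c_h^n\|^2+\rho\|\tilde{\textbf{u}}^n\|^2+\|\tilde c^n\|^2$ (the pressure slots dropping since $M=\mathrm{diag}(\rho,\rho,0,1)$ is singular there), the above yields, for each $n$, $\tfrac{1}{2dt}(E^{n+1}-E^n)+(\text{coercive terms at }n{+}1)\le C(\|f_1^{n+1}\|^2+\|f_2^{n+1}\|^2+\|g^{n+1}\|^2)+C\|\textbf{u}_h^n\|^2_{L^\infty}\|c_h^{n+1}\|^2$. Multiplying by $dt$, summing over $n=1,\dots,N-1$, telescoping, using $\tilde{\textbf{u}}^0=\tilde c^0=0$ together with the uniform bounds on $\|\textbf{u}_h^0\|,\|c_h^0\|$, and applying the discrete Grönwall lemma to the $\|c_h^{n+1}\|^2$ term (its coefficient $dt\,C\|\textbf{u}_h^n\|_{L^\infty}^2$ being summable) delivers (22); the $\max_n$ quantity reads off from the telescoped time-derivative terms and the three $l^2$-in-time sums from the coercive terms. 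Finally, since the right-hand side of (22) is finite exactly when $\{f_1^n\},\{f_2^n\},\{g^n\}\in l^2(L^2(\Omega))$ and $\|\textbf{u}_h^0\|,\|c_h^0\|$ are bounded, the stated memberships $\{\textbf{u}_h^n\},\{c_h^n\}\in l^\infty(L^2)\cap l^2(H^1)$, $\{\tilde{\textbf{u}}^n\},\{\tilde c^n\}\in l^\infty(L^2)$, $\{\tau_{1k}^{-1/2}\tilde{\textbf{u}}^{n+1}\},\{\tau_{3k}^{-1/2}\tilde c^{n+1}\}\in l^2(L^2)$ follow immediately.

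\textbf{Main obstacle.} The hardest point is the non-Newtonian viscous term: establishing $a_{PL}(\eta_h^n;\textbf{u}_h^{n+1},\textbf{u}_h^{n+1})\ge C_1^s\|\nabla\textbf{u}_h^{n+1}\|^2$ with a genuinely positive constant is immediate for $m\ge1$ but delicate in the shear-thinning regime $m<1$, where the weight $\{2(\partial_x u_{1h}^n)^2+2(\partial_y u_{2h}^n)^2+(\partial_y u_{1h}^n+\partial_x u_{2h}^n)^2\}^{\frac{m-1}{2}}$ degenerates wherever the discrete velocity gradients are large, so the two power-law regimes must genuinely be treated separately and assumption (\textbf{iv}) is what keeps $C_1^s>0$. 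The second, more technical difficulty is making the coarse--fine coupling and the forcing absorb cleanly while keeping the constants independent of $dt$; this rests entirely on the algebraic form of $\tau_{1k},\tau_{2k},\tau_{3k}$ in (20) combined with the inverse inequalities, and the presence of the non-skew term $a_{NLT}$ is what forces a discrete Grönwall argument instead of a direct bound.
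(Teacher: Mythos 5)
Your overall route coincides with the paper's: both proofs test the first equation of (21) with $\textbf{U}_h^{n+1}$, pair the subscale equation with $\tilde{\textbf{U}}^{n+1}$, combine, extract coercivity of $B$ via property (\textbf{a}) and the pressure cancellation, bound the forcing by Cauchy--Schwarz and Young, multiply by $dt$ and telescope. The genuine divergence is in how the two delicate groups of terms are handled. For the residual coarse--fine coupling terms $(\tilde{\textbf{U}}^{n+1},\mathcal{L}(\textbf{u}_h^n,\eta_h^n;\textbf{U}_h^{n+1}))_{\Omega_k}$ the paper simply invokes the observation that finite element functions and their first and second derivatives are ``bounded on each subdomain'' and writes the bound as $\bar C_i\|\tilde u_i^{n+1}\|$ with constants $\bar C_i$ that implicitly depend on the discrete solution being estimated (see (26)); you instead use the element-wise inverse inequalities together with the algebraic structure of $\tau_{1k},\tau_{3k}$ to convert these terms into $\|\tau_k^{-1/2}\tilde{\textbf{U}}^{n+1}\|_{\Omega_k}$ times the matching coercive norm of $\textbf{U}_h^{n+1}$ and then absorb. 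Your version is the standard Codina-type argument, is non-circular, and actually explains why the hypothesis ``regular partitions satisfying inverse inequalities'' appears in the statement — the paper's proof never uses that hypothesis. Second, you explicitly confront the non-skew transport term $a_{NLT}(\textbf{u}_h^n,c_h^{n+1},c_h^{n+1})$ and close it with a discrete Gr\"onwall step, whereas the paper's lower bound (25) for $B$ silently drops this term (it is not skew-symmetrized, unlike $c(\cdot,\cdot,\cdot)$, and the discrete velocity is not exactly divergence-free). Your Korn-inequality step for $a_{PL}$ is likewise a needed ingredient the paper elides.

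Two caveats, both of which you share with the paper rather than introduce: the claim that the power-law weight is bounded below for $m\ge 1$ is not true pointwise (for $m>1$ the weight $\{2(\partial_x u_{1})^2+\cdots\}^{(m-1)/2}$ vanishes where the velocity gradient vanishes, and the paper's own $\eta_l$ in (35) suffers the same defect), and the $L^\infty$ control of $\textbf{u}_h^n$ you invoke for the $a_{NLT}$ bound is only justified by assumption (\textbf{iv}) on the \emph{continuous} solution, not the discrete one. Neither of these is a gap relative to the paper's standard of rigor, and your treatment of the stabilization terms is strictly more defensible than the one printed.
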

\begin{proof}
Substituting $\textbf{V}_h$ by $(\textbf{u}_h^{n+1},p_h^{n+1},c_h^{n+1})$ in first sub equation of (21) and integrating the second one after multiplying it by $\tilde{\textbf{U}}^{n+1}$ on both sides we have
\begin{multline}
\frac{M}{dt}( \|\textbf{U}_h^{n+1}\|^2+ \|\tilde{\textbf{U}}^{n+1}\|^2)-(\frac{M}{dt} (\textbf{U}^n_h +  \tilde{\textbf{U}}^n), \textbf{U}_h^{n+1})+ \tau_k^{-1} \|\tilde{\textbf{U}}^{n+1}\|^2+2 \alpha \|c^{n+1}\|^2 \\
B(\textbf{u}_h^n,\eta_h^n; \textbf{U}_h^{n+1}, \textbf{U}_h^{n+1})-2 \sum_{k=1}^{n_{el}}(\tilde{u}_1^{n+1}, \frac{\partial}{\partial x}(2 \eta_h \frac{\partial u_{1h}^{n+1}}{\partial x})+\frac{\partial}{\partial y}(\eta_h( \frac{\partial u_{2h}^{n+1}}{\partial x}+ \frac{\partial u_{1h}^{n+1}}{\partial y})))_k-2 \sum_{k=1}^{n_{el}}(\tilde{u}_2^{n+1},\\
\frac{\partial}{\partial x}(\eta_h( \frac{\partial u_{2h}^{n+1}}{\partial x}+ \frac{\partial u_{1h}^{n+1}}{\partial y}))+\frac{\partial}{\partial y}(2 \eta_h \frac{\partial u_{2h}^{n+1}}{\partial y}))_k-2 \sum_{k=1}^{n_{el}}(\tilde{c}^{n+1}, \nabla \cdot \tilde{\nabla} c_h^{n+1})_k=(\textbf{F}^{n+1}, \textbf{U}_h^{n+1})+ (\textbf{F}^{n+1}, \tilde{\textbf{U}}^{n+1})
\end{multline}
Now we separately find bounds for each of the above terms. Applying the Cauchy-Schwarz and Youngs inequalities subsequently we have obtained the following results
\begin{equation}
\begin{split}
(\frac{M}{dt} (\textbf{U}^n_h +  \tilde{\textbf{U}}^n), \textbf{U}_h^{n+1}) & \leq \frac{\rho}{ dt}(\|\textbf{u}^n_h\|^2+ \|\textbf{u}_h^{n+1}\|^2+ \| \tilde{\textbf{u}}^n\|^2)\\
& \quad + \frac{1}{dt} (\|c^n_h\|^2+\|c^{n+1}_h\|^2+ \| \tilde{c}^n\|^2)\\
(\textbf{F}^{n+1}, \textbf{U}_h^{n+1}) + (\textbf{F}^{n+1}, \tilde{\textbf{U}}^{n+1}) & \leq \epsilon_1 (\|f_1^{n+1}\|^2 +\|f_2^{n+1}\|^2+\|c^{n+1}\|^2)+\\
& \quad \frac{1}{2 \epsilon_1}( \|\textbf{u}_h^{n+1}\|^2 + \| \tilde{\textbf{u}}^{n+1}\|^2+ \|c^{n+1}_h\|^2+\| \tilde{c}^n\|^2) \\
\end{split}
\end{equation}
To find an appropriate bound on $B(\cdot,\cdot; \cdot,\cdot)$ we have applied property $\textbf{(a)}$ on the trilinear term $c(\cdot,\cdot,\cdot)$ and assumptions $\textbf{(i)}-\textbf{(iv)}$ on the coefficients and finally arrived at the following
\begin{equation}
\begin{split}
B(\textbf{u}_h^n,\eta_h^n; \textbf{U}_h^{n+1}, \textbf{U}_h^{n+1}) 
& \geq 2 \eta_l  \| \nabla \textbf{u}_h^{n+1} \|^2 + D_l \| \nabla c^{n+1}\|^2 + \alpha \|c^{n+1}\|^2 \\
\end{split}
\end{equation}
Now to estimate the remaining terms which are defined over each sub domain $\Omega_k$ we make use of an important observation: by the virtue of the choice of the finite element spaces $V_h$ and $Q_h$, we can clearly say that over each element sub domain every function belonging to that spaces and their first and second order derivatives all are bounded functions.
\begin{equation}
\begin{split}
 \sum_{k=1}^{n_{el}}(\tilde{u}_1^{n+1}, \frac{\partial}{\partial x}(2 \eta_h \frac{\partial u_{1h}^{n+1}}{\partial x})+\frac{\partial}{\partial y}(\eta_h( \frac{\partial u_{2h}^{n+1}}{\partial x}+ \frac{\partial u_{1h}^{n+1}}{\partial y})))_k & \leq \bar{C}_1 \|\tilde{u}_1^{n+1}\| \\
  \sum_{k=1}^{n_{el}}(\tilde{u}_2^{n+1}, \frac{\partial}{\partial x}(\eta_h( \frac{\partial u_{2h}^{n+1}}{\partial x}+ \frac{\partial u_{1h}^{n+1}}{\partial y}))+\frac{\partial}{\partial y}(2 \eta_h \frac{\partial u_{2h}^{n+1}}{\partial y}))_k & \leq \bar{C}_2 \|\tilde{u}_2^{n+1}\| \\
\sum_{k=1}^{n_{el}}(\tilde{c}^{n+1}, \nabla \cdot \tilde{\nabla} c_h^{n+1})_k & \leq \bar{C}_3 \|\tilde{c}^{n+1}\|
\end{split}
\end{equation} 
where $\bar{C}_i$ for i=1,2,3 are positive constants obtained after applying the above observation. Now combining all these estimated results in (23), multiplying the resulting equation by $dt$ and adding for $n=1$ to $N-1$ we prove the stability result (22).
\end{proof}

\section{Error estimates}
We start this section with introducing the projection operators corresponding to each unknown variables followed by notation of error and it's component wise splitting. Later we go to derive $ apriori$ and $aposteriori$ error estimates.

\subsection{Projection operators : Error splitting}
Let us introduce the projection operator for each of these error components.\vspace{1 mm}\\
(P1) For any $\textbf{u} \in (H^2(\Omega))^2 $ we assume that there exists an interpolation $I^h_{\textbf{u}}:  (H^2(\Omega))^2 \longrightarrow  (V_h)^2 $ satisfying $b(\textbf{u}-I^h_{\textbf{u}}\textbf{u}, q_h)=0$ \hspace{2mm} $\forall q_h \in Q_h$. \vspace{2mm}\\
(P2) Let $I^h_p: H^1(\Omega) \longrightarrow Q_h$ be the $L^2$ orthogonal projection given by \\ $\int_{\Omega}(p-I^h_pp)q_h=0$  \hspace{1mm} $\forall q_h \in Q_h$ and for any $p \in H^1(\Omega)$ and\vspace{2mm}\\
(P3) Let $I^h_{c}: H^2(\Omega) \longrightarrow V_h$ be the $L^2$ orthogonal projection given by \\ $\int_{\Omega}(c-I^h_c c)d_h=0$ \hspace{1mm} $ \forall d_h \in V_h$ and for any $c \in H^2(\Omega)$. \vspace{2mm}\\
Let $\textbf{e}=(e_{\textbf{u}},e_p,e_c)$ denote the error where the components are $e_{\textbf{u}}=\sum_{i=1}^2 e_{ui} \bar{e}_{i}= \sum_{i=1}^2 (u_i-u_{ih}) \bar{e}_{i}, e_p= (p-p_h)$ and $e_c=(c-c_h)$. Now each component of the error can be split into two parts, namely interpolation part, $E^I$ and auxiliary part, $E^A$ as follows:  \vspace{1mm}\\
$e_{\textbf{u}}=(\textbf{u}-\textbf{u}_h)=(\textbf{u}-I^h_{\textbf{u}} \textbf{u})+(I^h_{\textbf{u}} \textbf{u}-\textbf{u}_h)= E^{I}_{\textbf{u}}+ E^{A}_{\textbf{u}}$ \vspace{1mm}\\
Similarly, 
$e_{p}=E^{I}_{p}+ E^{A}_{p}$, and 
$e_{c}=E^{I}_{c}+ E^{A}_{c}$ \vspace{2mm}\\
At this point let us mention the standard \textbf{interpolation estimation} result \cite{4} in the following: for any exact solution with regularity upto (r+1)
\begin{equation}
\|v-I^h_v v\|_l = \|E^I_v\|_l \leq C(p,\Omega) h^{r+1-l} \|v\|_{r+1} 
\end{equation}
where l ($\leq r+1$) is a positive integer and C is a constant depending on m and the domain. 
Now we put two results \cite{4} in the following using the properties of projection operators and these results will be used in error estimations.
\begin{result}
 For any interpolation error $E^I$
\begin{equation}
(\frac{\partial}{\partial t} E^{I}, v_h)=0 \hspace{2mm} \forall v_h \in V_h
\end{equation}
\end{result}
\begin{result}
 For any given auxiliary error $E^{A,n}$ and unknown $E^{A,n+1}$
\begin{equation}
(\frac{\partial}{\partial t} E^{A,n}, E^{A,n,\theta}) \geq  \frac{1}{2 dt} (\|E^{A,n+1}\|^2- \|E^{A,n}\|^2)
\end{equation}
\end{result}

\subsection{Apriori error estimation}
In this section we will find $apriori$ error bound, which depends on the exact solution. Here we first estimate $auxiliary$ error bound and later using that we will find $apriori$ error estimate. Before going into the detailed derivation here the definitions of norms, required for the upcoming estimates have been introduced below. For simplifying the notations we denote the spaces $L^{\infty}(0,T; L^2(\Omega)) \bigcap L^{2}(0,T; H^1(\Omega)) $ and $L^{2}(0,T; L^2(\Omega))$ by $\textbf{M}$ and $\textbf{N}$ respectively.
\begin{center}
$\|f\|_{\textbf{M}}^2  = \underset{0\leq n \leq N}{max} \|f^n\|^2 +dt \sum_{n=0}^{N-1} ( \| f^{n,\theta} \|^2  + \| \frac{\partial f}{\partial x}^{n,\theta} \|^2 + \| \frac{\partial f}{\partial y}^{n,\theta}\|^2)$\\
$\|f\|_{\textbf{N}}^2 = dt \sum_{n=0}^{N-1} \|f^{n,\theta}\|^2 dt $
\end{center}

\begin{theorem} (Auxiliary error estimate) \hspace{1mm}
For sufficiently regular continuous solutions $(\textbf{u},p,c)$ satisfying the assumptions \textbf{(iii)}-\textbf{(iv)} and computed solutions $(\textbf{u}_h,p_h,c_h)$ belonging to $V_h \times V_h \times Q_h \times V_h$ satisfying (19), assume $dt$ is sufficiently small and positive, and the coefficients satisfy the assumptions \textbf{(i)}-\textbf{(ii)}. Then there exists a constant $\tilde{C}$, depending upon $\textbf{u},p,c$ such that
\begin{equation}
\|E^A_{u1}\|^2_{\textbf{M}} + \|E^A_{u2}\|^2_{\textbf{M}}+ \|E^A_p\|_{\textbf{N}}^2  + \|E^A_{c}\|^2_{\textbf{M}} \leq \tilde{C} (h^2+ dt^{2r})
\end{equation}
where
\begin{equation}
    r=
    \begin{cases}
      1, & \text{if}\ \theta=1 \\
      2, & \text{if}\ \theta=0
    \end{cases}
  \end{equation}
\end{theorem}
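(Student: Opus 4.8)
The plan is to derive an evolution inequality for the auxiliary errors $E^A_{u1}, E^A_{u2}, E^A_c$ (and control $E^A_p$) by writing the error equation obtained from subtracting the fully discrete stabilized scheme (19) (in its equivalent two-scale form (21)) from the continuous variational formulation (7) tested against discrete functions, then splitting each error via $e = E^I + E^A$ and moving the interpolation contributions to the right-hand side. First I would choose the test function $\textbf{V}_h = (E^A_{u1}, E^A_{u2}, E^A_p, E^A_c)$, i.e. project the error equation onto the auxiliary (finite-element) components, and use Result~1 to kill the time-derivative of the interpolation error, $(\partial_t E^I, v_h) = 0$, and property (P1) together with $\nabla\cdot V_h\subset Q_h$ to eliminate the pressure interpolation coupling $b(E^I_{\textbf{u}}, q_h)$ and the discrete divergence term $b(\cdot, E^A_p)$. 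Then Result~2 converts the discrete time derivative $(\partial_t E^{A,n}, E^{A,n,\theta})$ into $\frac{1}{2dt}(\|E^{A,n+1}\|^2 - \|E^{A,n}\|^2)$, producing the telescoping structure needed for the $\textbf{M}$-norm.

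Next I would bound the bilinear/trilinear form contributions. For the convective term I would use property (a), $c(\textbf{u},\textbf{v},\textbf{v})=0$, to discard the worst term after symmetrizing, and the bounds (8) together with assumption (iv) (boundedness of $\textbf{u}$ and its first derivatives) to absorb the remaining convective and cross terms into $\epsilon\|\nabla E^A\|^2$ plus lower-order data. For the power-law viscous form $a_{PL}$, the key is to exploit its monotonicity/coercivity structure: the difference $a_{PL}(\eta(c_h,\textbf{u}_h);\textbf{u}_h,\cdot) - a_{PL}(\eta(c,\textbf{u});\textbf{u},\cdot)$ must be split into (i) a part keeping the viscosity fixed, which is coercive and gives $2\eta_l\|\nabla E^A_{\textbf{u}}\|^2$ as in (25)--(26), and (ii) a part measuring the variation of $\eta$ in both its $c$-argument ($Ke^{Bc}$, Lipschitz via assumption (iv) and boundedness of $c$) and its $\textbf{u}$-argument (the $\{\cdot\}^{(m-1)/2}$ shear-rate factor), treated separately for $m<1$ (shear thinning) and $m\ge 1$ (shear thickening) as the paper emphasizes. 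The transport form $a_{LT}$ gives coercivity $D_l\|\nabla E^A_c\|^2 + \alpha\|E^A_c\|^2$ by assumption (i)--(ii), and $a_{NLT}$ is handled like the convective term. The stabilization terms $\sum_k(\tau_k'(\cdots),-\mathcal{L}^*(\cdots))_{\Omega_k}$ and the $\textbf{d}$-corrections are bounded using the algebraic form (20) of $\tau_k$, inverse inequalities on $V_h,Q_h$, and the observation (used in the stability proof) that discrete functions and their derivatives are elementwise bounded; these contribute consistency terms of order $h^2$ after using the interpolation estimate (28) with $r=2$, $l=0,1,2$.

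Then I would collect all right-hand-side terms: the interpolation errors $E^I$ bounded via (28) by $Ch^2\|\textbf{u}\|_2^2, Ch^2\|p\|_1^2, Ch^2\|c\|_2^2$ (using the $L^\infty(0,T;H^2)$ regularity in assumption (iii)), the truncation error $(\textbf{TE}^{n,\theta},\cdot)$ bounded by (13) as $C\,dt^{2r}$, and time-derivative mismatches like $\|\partial_t E^I\|$ and $\partial_t\textbf{u} - (\textbf{u}^{n+1}-\textbf{u}^n)/dt$, again $O(h^2 + dt^{2r})$ under assumption (iv). After choosing the Young's-inequality parameters $\epsilon$ small enough to absorb $\|\nabla E^A_{\textbf{u}}\|^2$ and $\|\nabla E^A_c\|^2$ into the coercive left side, multiplying by $dt$, summing $n=0,\dots,N-1$, and invoking the zero initial auxiliary error ($\tilde{\textbf{u}}|_{t=0}=0$, and $E^{A}|_{t=0}=0$ by the choice of initial projections), I would arrive at a discrete Gr\"onwall inequality of the form $\max_n\|E^{A,n+1}\|^2 + dt\sum_n(\text{coercive norms}) \le C(h^2+dt^{2r}) + C\,dt\sum_n\|E^{A,n}\|^2$. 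Applying the discrete Gr\"onwall lemma (valid for $dt$ sufficiently small, which is why the hypothesis requires it) yields (31).

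The main obstacle I anticipate is the treatment of the nonlinear viscous term $a_{PL}$: unlike the quadratic Navier--Stokes case, the power-law operator with concentration-dependent coefficient $Ke^{Bc}\{2(\partial_x u_1)^2+2(\partial_y u_2)^2+(\partial_y u_1+\partial_x u_2)^2\}^{(m-1)/2}$ is only locally Lipschitz, and its monotonicity constant degenerates differently for $m<1$ versus $m>1$. Extracting a clean coercive lower bound $2\eta_l\|\nabla E^A_{\textbf{u}}\|^2$ while controlling the viscosity-variation remainder by $\epsilon\|\nabla E^A_{\textbf{u}}\|^2 + C(\|E^A_c\|^2 + \|E^I_c\|^2 + \|E^I_{\textbf{u}}\|^2_1)$ requires the boundedness assumption (iv) on the exact velocity gradients (so the shear-rate factor stays bounded above and below away from zero along the exact solution) and a careful case split — this is precisely the place where the shear-thinning and shear-thickening analyses must be carried out separately, and where most of the technical work lies.
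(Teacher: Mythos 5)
Your outline of the velocity--concentration part matches the paper's proof almost step for step: the same error equation obtained by subtracting (19) from (7), the same splitting $e=E^I+E^A$ with Result~1 and (P1) removing the interpolation time-derivative and the pressure--divergence coupling, Result~2 for the telescoping, coercivity of $a_{PL}$ and $a_{LT}$ giving $\eta_l\|\nabla E^A_{\textbf{u}}\|^2+D_l|E^A_c|_1^2+\alpha\|E^A_c\|^2$ with the $m\ge 1$ / $m<1$ case split for $\eta_l$ and $\eta_s$ exactly as in (34)--(36) and (43)--(46), the elementwise-boundedness observation plus $\tau_1,\tau_3=O(h^2)$ for the stabilization terms $I_1$--$I_4$, and (13) for the truncation error. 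The only methodological difference in this part is at the very end: you close with a discrete Gr\"onwall lemma, whereas the paper simply chooses the Young parameters $\epsilon_i$ so that every left-hand coefficient is positive and divides by their minimum. Both are legitimate; your Gr\"onwall route is arguably the more robust bookkeeping, since it does not require the zeroth-order coercivity to dominate the leftover $\|E^{A,n,\theta}\|^2$ terms.

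There is, however, one genuine gap: the theorem's conclusion contains $\|E^A_p\|^2_{\textbf{N}}$, and your argument cannot produce it. By testing with $(E^{A}_{\textbf{u}},E^A_p,E^A_c)$ the pressure drops out of the energy identity entirely --- the terms $b(\textbf{v}_h,E^A_p)$ and $-b(E^A_{\textbf{u}},q_h)$ cancel in pairs, which is precisely why the left-hand side carries no coercive control of $E^A_p$. The paper devotes a separate second part to this: it writes the momentum error equation (58), uses (P2) and $\nabla\cdot V_h\subset Q_h$ to show $b(\textbf{v}_h,p-I^h_p p)=0$, and then invokes the discrete inf-sup condition to bound $\|E^{A,n,\theta}_p\|$ by $\sup_{\textbf{v}_h} b(\textbf{v}_h,E^{A,n,\theta}_p)/\|\textbf{v}_h\|_1$, which in turn is estimated by the already-established velocity bounds from the first part. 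Without this step (or some equivalent recovery of the pressure from the momentum residual), the claimed estimate for $\|E^A_p\|_{\textbf{N}}$ is unproven; you should add it explicitly, noting that it is exactly here that the assumption $\nabla\cdot V_h\subset Q_h$ and the inf-sup stability of the velocity--pressure pair (or of the stabilized formulation) are actually used.
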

\begin{proof} In first part we will find bound for auxiliary error part of velocity $\textbf{u}$ and concentration $c$ with respect to $\textbf{M}$-norm and in the second part we will estimate auxiliary error for pressure term with respect to $\textbf{N}$ norm and finally combining them we will arrive at the desired result. \vspace{2mm} \\
\textbf{First part} Subtracting fully-discrete $ASGS$ stabilized formulation (19) from fully-discrete weak formulation (7) we have $\forall \hspace{1mm} \textbf{V}_h \in V_h \times V_h \times Q_h \times V_h$
\begin{multline}
(M\frac{(\textbf{U}^{n+1}-\textbf{U}^{n+1}_{h})- (\textbf{U}^{n}-\textbf{U}^{n}_{h})}{dt},\textbf{V}_{h}) + \{ B(\textbf{u}^{n},  \eta(c^{n},\textbf{u}^{n} );\textbf{U}^{n,\theta}, \textbf{V}_h) \\
-B(\textbf{u}_h^{n},  \eta(c_h^{n},\textbf{u}_h^{n} );\textbf{U}^{n,\theta}_h, \textbf{V}_h)\}
+ \sum_{k=1}^{n_{el}}(\tau_k'(M\partial_t (\textbf{U}^n-\textbf{U}^n_h)+\\
 \mathcal{L}(\textbf{u}^{n},  \eta(c^{n},\textbf{u}^{n} ) ;\textbf{U}^{n,\theta})-\mathcal{L}(\textbf{u}^{n}_h,  \eta(c_h^{n},\textbf{u}_h^{n} ) ;\textbf{U}^{n,\theta}_h)),\\
-\mathcal{L}^* (\textbf{u}_h,  \eta(c_h,\textbf{u}_h );\textbf{V}_h))_{\Omega_k} 
+\sum_{k=1}^{n_{el}}((I-\tau_k^{-1}\tau_k')(M \partial_t(\textbf{U}^{n}-\textbf{U}^{n}_h) + \\
 \mathcal{L}(\textbf{u}^{n}, \eta(c^{n},\textbf{u}^{n} ) ;\textbf{U}^{n,\theta})-\mathcal{L}(\textbf{u}^{n}_h,  \eta(c_h^{n},\textbf{u}_h^{n} ) ;\textbf{U}^{n,\theta}_h)), -\textbf{V}_h)_{\Omega_k}\\
+ \sum_{k=1}^{n_{el}} (\tau_k^{-1}\tau_k' \textbf{d}, \textbf{V}_h)_{\Omega_k}+\sum_{k=1}^{n_{el}}(\tau_k' \textbf{d},-\mathcal{L}^*(\textbf{u}_h,  \eta(c_h,\textbf{u}_h ); \textbf{V}_h))_{\Omega_k}=(\textbf{TE}^{n,\theta}, \textbf{V}_h)
\end{multline}
where $\textbf{d}$= $(\sum_{i=1}^{n+1}(\frac{1}{dt}M\tau_k')^i)(M\partial_t (\textbf{U}^n-\textbf{U}^n_h) + \mathcal{L}(\textbf{u}^{n},\eta(c^{n},\textbf{u}^{n})  ;\textbf{U}^{n,\theta})-\\
\mathcal{L}(\textbf{u}_h^{n}, \eta(c_h^{n},\textbf{u}_h^{n}) ;\textbf{U}_h^{n,\theta}))$. \vspace{2mm}\\
 Now applying error splitting for each of the terms and later using(28) and property (P1) of projection operator we have rearranged the above equation (32) as follows: $\forall  \textbf{V}_h \in V_h \times V_h \times Q_h \times V_h$
\begin{multline}
\rho (\frac{E^{A,n+1}_{\textbf{u}}-E^{A,n}_{\textbf{u}}}{dt}, \textbf{v}_h) + (\frac{E^{A,n+1}_c- E^{A,n}_c}{dt}, d_h)+ a_{LT}(E^{A,n,\theta}_c,d_h)+\\
\quad \int_{\Omega} \eta^n (2 \frac{\partial E^{A,n,\theta}_{u1}}{\partial x} \frac{\partial v_{1h}}{\partial x}+ \frac{\partial E^{A,n,\theta}_{u1}}{\partial y} \frac{\partial v_{1h}}{\partial y}+ \frac{\partial E^{A,n,\theta}_{u2}}{\partial x} \frac{\partial v_{2h}}{\partial x}+ 2 \frac{\partial E^{A,n,\theta}_{u2}}{\partial y} \frac{\partial v_{2h}}{\partial y})\\
\quad + \int_{\Omega} \eta^n \{(E^{A,n,\theta}_{u1})^2+ (E^{A,n,\theta}_{u2})^2\} \\
 = b(\textbf{v}_h, E^{A,n,\theta}_p) -b(E^{A,n,\theta}_{\textbf{u}},q_h) +b(\textbf{v}_h, E^{I,n,\theta}_p) +  \\
\quad \int_{\Omega} \eta^n \{(E^{A,n,\theta}_{u1})^2+ (E^{A,n,\theta}_{u2})^2\}- \int_{\Omega} \eta^n (\frac{\partial E^{A,n,\theta}_{u2}}{\partial x} \frac{\partial v_{1h}}{\partial y}+ \frac{\partial E^{A,n,\theta}_{u1}}{\partial y} \frac{\partial v_{2h}}{\partial x}) \\
\quad -c(E^{I,n}_{\textbf{u}},\textbf{u}^{n,\theta}_h,\textbf{v}_h)-c(E^{A,n}_{\textbf{u}},\textbf{u}^{n,\theta}_h,\textbf{v}_h)- c(\textbf{u}^{n},E^{I,n,\theta}_{\textbf{u}},\textbf{v}_h) \\
\quad - c(\textbf{u}^{n},E^{A,n, \theta}_{\textbf{u}},\textbf{v}_h)-a_{LT}(E^{I,n,\theta}_c,d_h)-a_{NLT}(E^{I,n}_\textbf{u},c_h^{n,\theta},d_h)-\\
\quad a_{NLT}(E^{A,n}_\textbf{u},c_h^{n,\theta},d_h)-a_{NLT}(\textbf{u}^{n}, E^{I,n,\theta}_c,d_h)-a_{NLT}(\textbf{u}^{n}, E^{A,n,\theta}_c,d_h)-\\
\quad a_{PL}(\eta^n; E^{I,n,\theta}_{\textbf{u}}, \textbf{v}_h)-a_{PL}(\eta^n-\eta_h^n ;\textbf{u}_h^{n,\theta},\textbf{v}_h) -I_1-I_2-I_3-I_4+ (\textbf{TE}^{n,\theta}, \textbf{V}_h)
\end{multline}
where $\eta^n$ and $\eta_h^n$ denote $\eta(c^n,\textbf{u}^n)$ and $\eta(c_h^n,\textbf{u}^n_h)$ respectively and the terms $I_i$, for $i=1,...,4$ will be expressed and estimated together later. The above rearrangement plays an important role in this estimation since the combined result of lower bound and upper bound, to be found now,  for the terms in left hand side and right hand side of (33) respectively will give the desired estimation. Since the above equation (33) holds for all $\textbf{V}_h \in (V_h)^m \times  Q_h \times V_h$, therefore in each term we replace $\textbf{v}_{h}, q_h,d_h$ by $E^{A,n,\theta}_{\textbf{u}},E^{A,n,\theta}_{p},E^{A,n,\theta}_{c}$ respectively. From now onward we estimate each expression after considering the replacements directly. \vspace{1mm}\\
Applying (29) on the first two terms of $LHS$ in (33) and applying the assumption \textbf{(i)} on the diffusion coefficients we can easily find a lower bound for the first three terms in $LHS$. Now estimations of the remaining terms involve the exercise of the assumption $\textbf{(iv)}$ on $\eta(\cdot,\cdot)$. Therefore the estimation of the fourth term follows this way:
\begin{equation}
\begin{split}
& \int_{\Omega} \eta^n \{ 2(\frac{\partial E^{A,n,\theta}_{u1}}{\partial x})^2 + (\frac{\partial E^{A,n,\theta}_{u1}}{\partial y})^2+ (\frac{\partial E^{A,n,\theta}_{u2}}{\partial x})^2+ 2(\frac{\partial E^{A,n,\theta}_{u2}}{\partial y})^2\}\\
& = \int_{\Omega} K e^{Bc^n} \{ 2 (\frac{\partial u_1^n}{\partial x})^2 + 2 (\frac{\partial u_2^n}{\partial y})^2 + (\frac{\partial u_1^n}{\partial y}+ \frac{\partial u_2^n}{\partial x})^2 \}^{\frac{m-1}{2}} \{ 2(\frac{\partial E^{A,n,\theta}_{u1}}{\partial x})^2 +\\
& \quad (\frac{\partial E^{A,n,\theta}_{u1}}{\partial y})^2+ (\frac{\partial E^{A,n,\theta}_{u2}}{\partial x})^2+ 2(\frac{\partial E^{A,n,\theta}_{u2}}{\partial y})^2\} \geq \eta_l  \mid E^{A,n,\theta}_{\textbf{u}} \mid_1^2
\end{split}
\end{equation}
where applying the assumption $\textbf{(iv)}$ on the continuous solutions and it's derivatives 
\begin{equation}
    \eta_l=
    \begin{cases}
      K e^{B c_l} \{ 2 u_{1l}^2+ 2u_{2l}'^2 + (u_{1l}'+u_{2l})^2 \}^{\frac{m-1}{2}}, & \text{if}\ m \geq 1 \\
      K e^{B c_l} \{ 2 u_{1s}^2+ 2u_{2s}^2 + (u_{1s}'+u_{2s})^2 \}^{\frac{m-1}{2}}, & \text{if}\ m<1
    \end{cases}
  \end{equation}
where $c_l, u_{1l}, u_{1l}', u_{2l}, u_{2l}'$ are $minimum$ of $c^n, \frac{\partial u_1^n}{\partial x}, \frac{\partial u_1^n}{\partial y}, \frac{\partial u_2^n}{\partial x}, \frac{\partial u_2^n}{\partial y}$ respectively and $ u_{1s}, u_{1s}', u_{2s}, u_{2s}'$ are $maximum$ of $ \frac{\partial u_1^n}{\partial x}, \frac{\partial u_1^n}{\partial y}, \frac{\partial u_2^n}{\partial x}, \frac{\partial u_2^n}{\partial y}$ respectively for each n taken over $\Omega$. 
Similar to this derivation the estimated result of the next term is
\begin{equation}
 \int_{\Omega} \eta(c^{n}, \textbf{u}^{n})\{(E^{A,n,\theta}_{u1})^2+ (E^{A,n,\theta}_{u2})^2\} \geq \eta_l  \|E^{A,n,\theta}_{\textbf{u}} \|^2
\end{equation}
where $\eta_l$ carries the same values as earlier for each of the cases considered above.
Combining all the above results in (33) we have
\begin{equation}
\begin{split}
LHS & \geq \frac{\rho}{2 dt}  (\|E^{A,n+1}_{\textbf{u}}\|^2- \|E^{A,n}_{\textbf{u}}\|^2)+ \frac{1}{2 dt}(\|E^{A,n+1}_c\|^2- \|E^{A,n}_c\|^2)+ \eta_l  \| E^{A,n,\theta}_{\textbf{u}}\|_1^2 \\
& \quad +D_l \mid E^{A,n,\theta}_c \mid_1^2 + \alpha \|E^{A,n,\theta}_c\|^2
\end{split}
\end{equation}
where $D_l$= $ \underset{i=1,...,m}{min} \underset{\Omega}{inf} D_i $. \\
Now let us estimate the terms in the $RHS$ in (33).  After the substitution of $\textbf{V}_h$ by the respective auxiliary error parts it can be seen that first two terms in the $RHS$ get canceled out with each other and estimations of the bilinear terms are quite straight-forward with the help of the Cauchy-Schwarz inequality, Young's Inequality, standard interpolation estimate (27) and assumptions \textbf{(i)} on the diffusion coefficients wherever required. Hence we skip the detailed derivations of those terms and mention the final results only.
\begin{equation}
\begin{split}
b(E^{A,n,\theta}_{\textbf{u}},E^{I,n,\theta}_p) &  \leq  \frac{1}{2 \epsilon_1} \mid E^{A,n,\theta}_{\textbf{u}} \mid_1^2+ \epsilon_1 C^2 h^2(\frac{1+\theta}{2}\| p^{n+1}\|_1 + \frac{1-\theta}{2}\| p^n \|_1)^2 \\
-a_{LT}(E^{I,n,\theta}_c, E^{A,n,\theta}_c) 
& \leq \frac{D_m}{2 \epsilon_2}  \mid E^{A,n,\theta}_{c} \mid_1^2+ \frac{D_m \epsilon_2}{2} C^2 h^2 (\frac{1+\theta}{2}\| c^{n+1} \|_2+\frac{1-\theta}{2}\| c^n \|_2)^2
\end{split}
\end{equation}
Now we estimate the trilinear terms $c(\cdot,\cdot,\cdot)$ using it's properties \textbf{(a)} and \textbf{(b)} associated with it. We again apply few of the above mentioned inequalities to derive the estimates.
\begin{equation}
\begin{split}
-c(E^{I,n}_{\textbf{u}},\textbf{u}^{n,\theta}_h, E^{A,n,\theta}_{\textbf{u}}) & = c(E^{I,n}_{\textbf{u}},E^{I,n,\theta}_{\textbf{u}}, E^{A,n,\theta}_{\textbf{u}})+ c(E^{I,n}_{\textbf{u}},E^{A,n,\theta}_{\textbf{u}}, E^{A,n,\theta}_{\textbf{u}}) -c(E^{I,n}_{\textbf{u}},\textbf{u}^{n,\theta}, E^{A,n,\theta}_{\textbf{u}}) \\
& \leq C \|E^{I,n}_{\textbf{u}}\| \|E^{I,n,\theta}_{\textbf{u}}\|_1 \|E^{A,n,\theta}_{\textbf{u}}\|_1 +C \|E^{I,n,\theta}_{\textbf{u}}\| \|\textbf{u}^{n,\theta}\|_2 \|E^{A,n,\theta}_{\textbf{u}}\|_1 \\
& \leq h^4 \frac{C_2 \epsilon_3 }{2}( \|\textbf{u}^{n}\|+1) (\frac{1+\theta}{2} \|\textbf{u}^{n+1}\|_2 + \frac{1-\theta}{2} \|\textbf{u}^{n}\|_2)^2 +\frac{C_2}{ \epsilon_3} \|E^{A,n,\theta}_{\textbf{u}}\|_1^2 \\
\end{split}
\end{equation}
and applying the same properties and inequalities we have the following estimated results for the similar terms,
\begin{equation}
\begin{split}
-c(\textbf{u}^{n},E^{I,n,\theta}_{\textbf{u}}, E^{A,n,\theta}_{\textbf{u}}) & \leq  \frac{ \epsilon_3}{2} C_2 h^2 (\frac{1+\theta}{2} \|\textbf{u}^{n+1}\|_2 + \frac{1-\theta}{2} \|\textbf{u}^{n}\|_2)^2 + \frac{C_2}{2 \epsilon_3} \|E^{A,n,\theta}_{\textbf{u}}\|^2\\
-c(E^{A,n}_{\textbf{u}},\textbf{u}^{n,\theta}_h, E^{A,n,\theta}_{\textbf{u}}) & \leq C_2' \|E^{A,n,\theta}_{\textbf{u}}\|_1^2 \\
-c(\textbf{u}^{n},E^{A,n,\theta}_{\textbf{u}}, E^{A,n,\theta}_{\textbf{u}}) & =0 
\end{split}
\end{equation}
The estimation of the another type of trilinear term $a_{NLT}(\cdot,\cdot,\cdot)$ involves the use of assumption $\textbf{(iii)}$ on the continuous solutions $\textbf{u}^n$ and $c^n$ and the Cauchy-Schwarz and Youngs inequalities as follows:
\begin{equation}
\begin{split}
-a_{NLT}(\textbf{u}^{n}, E^{I,n,\theta}_c, E^{A,n,\theta}_c) & \leq \frac{\bar{C_1} + \bar{C_2}}{2} \{ \frac{C^2 h^2}{\epsilon_4} (\frac{1+\theta}{2} \|c^{n+1}\|_2 + \frac{1-\theta}{2} \|c^{n}\|_2)^2 + \epsilon_4 \|E^{A,n,\theta}_c\|^2 \}\\
-a_{NLT}(\textbf{u}^{n}, E^{A,n,\theta}_c, E^{A,n,\theta}_c) 
& \leq \frac{\bar{C_1}+\bar{C_2}}{2 } \mid E^{A,n,\theta}_c \mid_1^2 + \frac{\bar{C_1}+ \bar{C_2}}{2}  \|E^{A,n,\theta}_c\|^2\\
\end{split}
\end{equation}
Applying assumption \textbf{(iv)} we have $\bar{C_1}$= $\underset{\Omega}{sup}$ $\mid u_1 \mid$ and $\bar{C_2}$= $\underset{\Omega}{sup}$ $\mid u_2 \mid$ for each $n$. 
Estimation of the next term is slightly different from the previous one by splitting it into three parts as follows:
\begin{equation}
\begin{split}
-a_{NLT}(E_{\textbf{u}}^{I,n}, c_h^{n,\theta}, E^{A,n,\theta}_c) & \leq \mid a_{NLT}(E_{\textbf{u}}^{I,n}, E_c^{I,n,\theta}, E^{A,n,\theta}_c) \mid + \mid a_{NLT}(E_{\textbf{u}}^{I,n}, E_c^{A,n,\theta}, E^{A,n,\theta}_c) \mid \\
& \quad + \mid a_{NLT}(E_{\textbf{u}}^{I,n}, c^{n,\theta}, E^{A,n,\theta}_c) \mid
\end{split}
\end{equation}
Using the Sobolev, the Cauchy-Schwarz and the Young inequalities subsequently the estimations of these three parts are:
\begin{equation}
\begin{split}
\mid a_{NLT}(E_{\textbf{u}}^{I,n}, E_c^{I,n,\theta}, E^{A,n,\theta}_c) \mid & \leq  \epsilon_4 \|E^{A,n,\theta}_c\|^2 + \frac{C^4 h^4}{2 \epsilon_4}  \| \textbf{u}^n\|_2^2 (\frac{1+\theta}{2} \|c^{n+1}\|_2 + \frac{1+\theta}{2} \|c^{n}\|_2)^2 \\
\mid a_{NLT}(E^{I,n}_\textbf{u}, E^{A,n, \theta}_c, E^{A,n,\theta}_c) \mid & \leq \epsilon_4 \|E^{A,n,\theta}_c\|^2 + \frac{C_1'}{2 \epsilon_4} \|\frac{\partial E^{A,n,\theta}_c}{\partial x}\|^2+  \frac{C_2'}{2 \epsilon_5} \|\frac{\partial E^{A,n,\theta}_c}{\partial y}\|^2\\
\mid a_{NLT}(E^{I,n}_\textbf{u}, c^{n, \theta}, E^{A,n,\theta}_c) \mid
& \leq \epsilon_4 \|E^{A,n,\theta}_c\|^2 + \frac{C^2 h^2}{2 \epsilon_4} \|c^{n,\theta}\|_2^2 \{  (\frac{1+\theta}{2} \|\textbf{u}^{n+1}\|_2+  \frac{1-\theta}{2} \|\textbf{u}^{n}\|_2)^2 \}
\end{split}
\end{equation}
On expanding the estimation of the term $a_{PL}(\cdot; \cdot,\cdot)$ can be easily carried out with the help of above mentioned standard inequalities and the estimated results are as follows:
\begin{equation}
\begin{split}
-a_{PL}(\eta^n; E^{I,n,\theta}_\textbf{u}, E^{A,n,\theta}_\textbf{u}) & \leq \frac{\eta_s}{\epsilon_5} C^2 h^2 \{ (\frac{1+\theta}{2} \|\textbf{u}^{n+1}\|_2 + \frac{1-\theta}{2} \|\textbf{u}^{n}\|_2)^2 \} + \eta_s \epsilon_5 \mid E^{A,n,\theta}_{\textbf{u}}\mid_1^2\\
\end{split}
\end{equation}
where applying the assumption $\textbf{(iv)}$ on the continuous solutions and it's derivatives 
\begin{equation}
    \eta_s=
    \begin{cases}
       K e^{B c_s} \{ 2 u_{1s}^2+ 2 u_{2s}'^2+ (u_{1s}'+ u_{2s})^2 \}^{\frac{m-1}{2}}, & \text{if}\ m \geq 1 \\
      K e^{B c_s} \{ 2 u_{1l}^2+ 2 u_{2l}'^2+ (u_{1l}'+ u_{2l})^2 \}^{\frac{m-1}{2}}, & \text{if}\ m<1
    \end{cases}
  \end{equation}
Now in a likewise manner the estimation of the next terms:
\begin{equation}
\begin{split}
& -a_{PL}(\eta^{n} - \eta^{n}_h; \textbf{u}_h^{n,\theta}, E^{A,n,\theta}_\textbf{u})  \\
& \leq \mid a_{PL}(\eta^{n}; \textbf{u}^{n,\theta}, E^{A,n,\theta}_\textbf{u}) \mid + \mid a_{PL}(\eta^{n}; E_\textbf{u}^{I,n,\theta}, E^{A,n,\theta}_\textbf{u}) \mid  + \mid a_{PL}(\eta^{n}; E_\textbf{u}^{A,n,\theta}, E^{A,n,\theta}_\textbf{u}) \mid  \\
& \leq \frac{\eta_s}{\epsilon_5} C^2 h^2  (\frac{1+\theta}{2} \|\textbf{u}^{n+1}\|_2 + \frac{1-\theta}{2} \|\textbf{u}^{n}\|_2)^2  + \eta_s \epsilon_5  \mid E^{A,n,\theta}_{\textbf{u}} \mid_1^2 
\end{split}
\end{equation}
The estimations of the next terms follow the applications of assumption $\textbf{(iv)}$ on the viscosity expression and $Poincare$ inequality subsequently as follows:
\begin{equation}
\begin{split}
\int_{\Omega} \eta^n \{ (E^{A,n,\theta}_{u1})^2 + (E^{A,n,\theta}_{u2})^2 \} & \leq  \eta_s C_P (\mid E^{A,n,\theta}_{u1}\mid_1^2+\mid E^{A,n,\theta}_{u2}\mid_1^2) \\
-\int_{\Omega} 2 \eta^{n} \frac{\partial E^{A,n,\theta}_{u2}}{\partial x} \frac{\partial E^{A,n,\theta}_{u1}}{\partial y} & \leq \frac{\eta_s}{2 \epsilon_6} \| \frac{\partial E^{A,n,\theta}_{u2}}{\partial x}\|^2 + \frac{\eta_s \epsilon_6}{2 } \| \frac{\partial E^{A,n,\theta}_{u1}}{\partial y}\|^2 
\end{split}
\end{equation}
where $\eta_s$ satisfies (45). Now our job is to estimate those terms defined on the sub domain $\Omega_k$. Here we use that observation made during the proof of stability result: by the virtue of the choice of the finite element spaces $V_h$ and $Q_h$, we can clearly say that over each element sub domain every function belonging to that spaces and their first and second order derivatives all are bounded functions.
\begin{equation}
\begin{split}
I_1 & = \sum_{k=1}^{n_{el}}[(\tau_k' (M \partial_t (\textbf{U}^n-\textbf{U}^n_h)+ \mathcal{L}(\textbf{u}^n, \eta^n ; \textbf{U}^{n,\theta})- \mathcal{L}(\textbf{u}_h^n, \eta_h^n; \textbf{U}_h^{n,\theta})), -\mathcal{L}^*(\textbf{u}_h, \eta_h; E^{A,n,\theta}_{\textbf{U}}))_{\Omega_k} \\
& = \sum_{k=1}^{n_{el}} (\tau_{1k}' ( \partial_t (\textbf{u}^n-\textbf{u}^n_h)+ \rho(\textbf{u}^n \cdot \nabla) \textbf{u}^{n,\theta} + \nabla p^{n,\theta}- \nabla \cdot 2 \eta \textbf{D}(\textbf{u}^{n,\theta})-\rho(\textbf{u}^n_h \cdot \nabla) \textbf{u}^{n,\theta}_h- \\
& \quad \nabla p^{n,\theta}_h-  \nabla \cdot 2 \eta \textbf{D}(\textbf{u}^{n,\theta}_h), \rho(\textbf{u}^n \cdot \nabla) E^{A,n,\theta}_{\textbf{u}}+ \nabla E^{A,n,\theta}_p- \nabla \cdot 2 \eta \textbf{D}(E^{A,n,\theta}_{\textbf{u}}))_{\Omega_k} +\\
& \quad \tau_{2k} (\nabla \cdot (\textbf{u}^{n,\theta}-\textbf{u}_h^{n,\theta}), \nabla \cdot E^{A,n,\theta}_{\textbf{u}})_{\Omega_k} + \tau_{3k}' (\partial_t (c^{n,\theta}-c_h^{n,\theta})- \nabla \cdot \tilde{\nabla} (c^{n,\theta}-c^{n,\theta}_h) + \\
& \quad \textbf{u}^n \cdot \nabla c^{n,\theta}- \textbf{u}^n_h \cdot \nabla c_h^{n,\theta} + \alpha (c^{n,\theta}-c^{n,\theta}_h), \nabla \cdot \tilde{\nabla} E^{A,n,\theta}_c + \textbf{u}^n \cdot \nabla E^{A,n,\theta}_c - \alpha E^{A,n,\theta}_c)_{\Omega_k}]
\end{split}
\end{equation}
Subsequently applying error splitting, the above observation on the terms belonging to the finite element spaces, the standard interpolation estimate (27) and the consequence of assumption $\textbf{(iii)}$ on the continuous solutions we have reached at the following result.
\begin{equation}
\begin{split}
I_1 &  \leq \mid \tau_1 \mid \bar{C}_1^1(h,\textbf{u}^{n,\theta},p^{n,\theta}) + h^2 \mid \tau_2 \mid  \bar{C}^1_2(\textbf{u}^{n,\theta}) + \mid \tau_3 \mid \bar{C}_3^1(h,c^{n,\theta})
\end{split}
\end{equation}
where the parameters $\bar{C}_i^1$ (for $i=1,2,3$) are resultant summations of positive constants, obtained due to applying the above mentioned results, multiplied with different norms of exact solutions. Now before estimating the remaining terms we first expand the other notations $I_i$ for i=2,3,4 and later using the similar argument exercised to estimate $I_1$ we mention the estimated results of them.
\begin{equation}
\begin{split}
I_2 & = \sum_{k=1}^{n_{el}}((I- \tau_k^{-1} \tau_k') (M \partial_t (\textbf{U}^n-\textbf{U}^n_h)+ \mathcal{L}(\textbf{u}^n, \eta^n; \textbf{U}^{n,\theta})- \mathcal{L}(\textbf{u}_h^n, \eta_h^n; \textbf{U}_h^{n,\theta})),-E^{A,n,\theta}_{\textbf{U}})_{\Omega_k} \\
\end{split}
\end{equation}
On expansion of the next term
\begin{equation}
\begin{split}
I_3 & =\sum_{k=1}^{n_{el}} (\tau_k^{-1}\tau_k' \textbf{d}, E^{A,n,\theta}_{\textbf{U}})_{\Omega_k} \\
&= \sum_{k=1}^{n_{el}} (\tau_k^{-1}\tau_k' \sum_{i=1}^{n+1} ((\frac{1}{dt}M\tau_k')^i)(M\partial_t (\textbf{U}^n-\textbf{U}^n_h) +  \mathcal{L}(\textbf{u}^n, \eta^n; \textbf{U}^{n,\theta})- \mathcal{L}(\textbf{u}_h^n, \eta_h^n; \textbf{U}_h^{n,\theta})), E^{A,n,\theta}_{\textbf{U}})_{\Omega_k} \\
& \leq \sum_{k=1}^{n_{el}} (\tau_k^{-1}\tau_k' \sum_{i=1}^{\infty} ((\frac{1}{dt}M\tau_k')^i)(M\partial_t (\textbf{U}^n-\textbf{U}^n_h) +  \mathcal{L}(\textbf{u}^n, \eta^n; \textbf{U}^{n,\theta})- \mathcal{L}(\textbf{u}_h^n, \eta_h^n; \textbf{U}_h^{n,\theta})), E^{A,n,\theta}_{\textbf{U}})_{\Omega_k} \\
& = \sum_{k=1}^{n_{el}}  [ \frac{\rho \tau_{1k}}{(dt+\rho \tau_{1k})}  ( \partial_t (\textbf{u}^n-\textbf{u}^n_h)+ \rho(\textbf{u}^n \cdot \nabla) \textbf{u}^{n,\theta} + \nabla p^{n,\theta}- \nabla \cdot 2 \eta \textbf{D}(\textbf{u}^{n,\theta})-\rho(\textbf{u}^n_h \cdot \nabla) \textbf{u}^{n,\theta}_h- \\
& \quad \nabla p^{n,\theta}_h-  \nabla \cdot 2 \eta \textbf{D}(\textbf{u}^{n,\theta}_h), E^{A,n,\theta}_{\textbf{u}})_{\Omega_k}+ \frac{\tau_{3k}}{(dt+\tau_{3k})}(\partial_t (c^{n,\theta}-c_h^{n,\theta})- \nabla \cdot \tilde{\nabla} (c^{n,\theta}-c^{n,\theta}_h) +  \textbf{u}^n \cdot \nabla c^{n,\theta}\\
& \quad -\textbf{u}^n_h \cdot \nabla c_h^{n,\theta} + \alpha (c^{n,\theta}-c^{n,\theta}_h), E^{A,n,\theta}_c)_{\Omega_k} ]
\end{split}
\end{equation}
For $dt>0$, $\frac{\rho \tau_1}{dt+\rho \tau_1} < 1$ and $\frac{\tau_3}{dt+ \tau_3} < 1$, which implies $\frac{\rho \tau_1'}{dt} < 1$ and $\frac{\tau_3'}{dt} < 1$ and therefore the series $\sum_{i=1}^{\infty}(\frac{\rho}{dt} \tau_1' )^i$ and $\sum_{i=1}^{\infty}(\frac{1}{dt} \tau_3' )^i$ converges to $\frac{\rho \tau_1' }{(dt- \tau_1' )}=\frac{\rho \tau_{1k}}{dt}$ and $\frac{\tau_3' }{(dt- \tau_3' )}=\frac{\tau_{3k}}{dt}$ respectively.
\begin{equation}
\begin{split}
I_4 & = \sum_{k=1}^{n_{el}}(\tau_k' \textbf{d},-\mathcal{L}^*(\textbf{u}_h;E^{A,n,\theta} _{\textbf{U}}))_{\Omega_k} \\
& \leq   \sum_{k=1}^{n_{el}}[ (\frac{\rho \tau_{1k}^2}{(dt+\rho \tau_{1k})} ( \partial_t (\textbf{u}^n-\textbf{u}^n_h)+ \rho(\textbf{u}^n \cdot \nabla) \textbf{u}^{n,\theta} + \nabla p^{n,\theta}- \nabla \cdot 2 \eta \textbf{D}(\textbf{u}^{n,\theta})-\rho(\textbf{u}^n_h \cdot \nabla) \textbf{u}^{n,\theta}_h- \\
& \quad \nabla p^{n,\theta}_h-  \nabla \cdot 2 \eta \textbf{D}(\textbf{u}^{n,\theta}_h), \rho(\textbf{u}^n \cdot \nabla) E^{A,n,\theta}_{\textbf{u}}+ \nabla E^{A,n,\theta}_p- \nabla \cdot 2 \eta \textbf{D}(E^{A,n,\theta}_{\textbf{u}}))_{\Omega_k} +  \\
& \quad \frac{\tau_{3k}^2}{(dt+\tau_{3k})}(\partial_t (c^{n,\theta}-c_h^{n,\theta})- \nabla \cdot \tilde{\nabla} (c^{n,\theta}-c^{n,\theta}_h) + \textbf{u}^n \cdot \nabla c^{n,\theta}- \textbf{u}^n_h \cdot \nabla c_h^{n,\theta} + \alpha (c^{n,\theta}-c^{n,\theta}_h),\\
& \quad   \nabla \cdot \tilde{\nabla} E^{A,n,\theta}_c + \textbf{u}^n \cdot \nabla E^{A,n,\theta}_c - \alpha E^{A,n,\theta}_c)_{\Omega_k}]
\end{split}
\end{equation}
Clearly on expansion $I_3$ and $I_4$ look same as $I_2$ and $I_1$ respectively and hence the estimated results of the remaining terms are as follows:
\begin{equation}
\begin{split}
I_2 & \leq \mid \tau_1 \mid \bar{C}_1^2(h,\textbf{u}^{n,\theta},p^{n,\theta}) + \mid \tau_3 \mid \bar{C}_3^2(h,c^{n,\theta}) \\
I_3 & \leq \mid \tau_1 \mid \bar{C}_1^3(h,\textbf{u}^{n,\theta},p^{n,\theta})  + \mid \tau_3 \mid \bar{C}_3^3(h,c^{n,\theta})\\
I_4 & \leq \mid \tau_1 \mid \bar{C}_1^4(h,\textbf{u}^{n,\theta},p^{n,\theta})  + \mid \tau_3 \mid \bar{C}_3^4(h,c^{n,\theta})
\end{split}
\end{equation}
where the parameters $\bar{C}_1^i, \bar{C}_3^i$ for $i=2,3,4$ are obtained similarly. Again applying the Cauchy-Schwarz and Young's inequality once again to estimate the truncation error terms as follows:
\begin{equation}
\begin{split}
(\textbf{TE}^{n,\theta}, E^{A,n,\theta}_{\textbf{U}})
& \leq \frac{ \epsilon_7}{2} \|\textbf{TE}^{n,\theta}\|^2 + \frac{1}{2 \epsilon_7} (\|E^{A,n,\theta}_{\textbf{u}}\|^2 +\|E^{A,n,\theta}_{c}\|^2)
\end{split}
\end{equation}
Finally  we have completed finding bounds for each of the terms in the right hand side of (33). Now we combine all the results obtained in (37)-(54) into the equation (33)
and take out the suitable required terms to the left hand side. Next to that  multiplying both sides by 2$dt$ and taking summation over the time steps for n=0,1,...,$(N-1)$ to both the sides successively finally we have (33) as follows:
\begin{multline}
\rho \sum_{n=0}^{N-1} (\|E^{A,n+1}_{\textbf{u}}\|^2- \|E^{A,n}_{\textbf{u}}\|^2)+   \sum_{n=0}^{N-1} (\|E^{A,n+1}_c\|^2- \|E^{A,n}_c\|^2)+ (2 \eta_l-\frac{1}{\epsilon_1}-\frac{3C_2}{\epsilon_3}-2C_2'-\\
\quad 4 \eta_s \epsilon_5-2 \eta_s C_P-\frac{1}{\epsilon_7})  \sum_{n=0}^{N-1} \mid E^{A,n,\theta}_{\textbf{u}}\mid_1^2 dt+ (2 \eta_l-\frac{3C_2}{\epsilon_3}-2C_2'-\frac{1}{\epsilon_7})  \sum_{n=0}^{N-1} \| E^{A,n,\theta}_{\textbf{u}}\|_1^2 dt+  \\
\quad \{D_l- \frac{D_m}{\epsilon_2}- (\bar{C}_1+\bar{C}_2) \epsilon_4-\frac{1}{\epsilon_7} \}  \sum_{n=0}^{N-1} \mid E^{A,n,\theta}_{c}\mid_1^2 dt + (2 \alpha- 4 \epsilon_4-\frac{1}{\epsilon_7})  \sum_{n=0}^{N-1} \| E^{A,n,\theta}_{c}\|_1^2 dt\\
\leq h^2  \sum_{n=0}^{N-1} [2\epsilon_1 C^2 (\frac{1+\theta}{2} \|p^{n+1}\|_1+\frac{1-\theta}{2} \|p^{n}\|_1)^2+C^2( D_m \epsilon_2+ \frac{\bar{C}_1+\bar{C}_2}{\epsilon_4}+ \frac{C^2 h^2}{\epsilon_4} \|\textbf{u}^n\|_2^2) \\
\quad (\frac{1+\theta}{2} \|c^{n+1}\|_2+\frac{1-\theta}{2} \|c^{n}\|_2)^2 + (h^2C_2 \epsilon_3+C_2 \epsilon_3 + \frac{C^2}{\epsilon_4} \|c^{n,\theta}\|_2^2+\frac{2\eta_s C^2}{\epsilon_5}) \\
\quad (\frac{1+\theta}{2} \|\textbf{u}^{n+1}\|_2+\frac{1-\theta}{2} \|\textbf{u}^{n}\|_2)^2 ] dt +  \mid \tau_1 \mid \sum_{n=0}^{N-1}  \sum_{i=1}^3
 \bar{C}_1^i(h,\textbf{u}^{n,\theta},p^{n,\theta}) dt+\\
  \mid \tau_2 \mid h^2  \sum_{n=0}^{N-1} \bar{C}^1_2(\textbf{u}^{n,\theta}) dt+ \mid \tau_3 \mid \sum_{n=0}^{N-1} \sum_{i=1}^3  \bar{C}_1^i(h,\textbf{u}^{n,\theta},p^{n,\theta}) dt +\epsilon_7 \sum_{n=0}^{N-1} \|\textbf{TE}^{n,\theta}\|^2 dt
\end{multline}
We can choose the values of the arbitrary parameters $\epsilon_i$s for $i=1,2,...,9$ in such a manner that we can make all the coefficients in the left hand side positive. Now after taking minimum of all the coefficients in left hand side, let us divide both the sides with that minimum, which turns out to be a positive real number. Applying (3.7) on the truncation error terms we will finally arrive at the following expression using the fact that $\tau_1$ and $\tau_3$ are of order $h^2$:
\begin{equation}
\boxed{ \|E^{A}_{\textbf{u}}\|_{\textbf{M}}^2+ \|E^{A}_{c}\|_{\textbf{M}}^2 \leq C(T,\textbf{u},p,c) (h^2+dt^{2r})}
\end{equation}
where
\begin{equation}
    r=
    \begin{cases}
      1, & \text{if}\ \theta=1 \\
      2, & \text{if}\ \theta=0
    \end{cases}
  \end{equation}
We have used the fact that $\sum_{n=0}^{N-1} \int_{t^n}^{t^{n+1}} M dt \leq M T  $. This completes the first part of the proof. \vspace{2mm}\\
\textbf{Second part} Using this above result we are going to estimate auxiliary error part of pressure. We will use inf-sup condition to find estimate for $E_p^A$. Applying Galerkin orthogonality on flow problem and later splitting the error we have obtained
\begin{multline}
 b(\textbf{v}_h,p-I^h_p p)+ b(\textbf{v}_h,I^h_p p- p_h)=(\partial_t E^A_{u1}, v_{1h})+ (\partial_t E^A_{u2}, v_{2h}) + c(E^I_{\textbf{u}},\textbf{u},\textbf{v}_h)\\
+c(E^A_{\textbf{u}},\textbf{u},\textbf{v}_h)+ c(\textbf{u}_h,E^I_{\textbf{u}},\textbf{v}_h)+ c(\textbf{u}_h,E^A_{\textbf{u}},\textbf{v}_h)  \\
+ a_{PL}(\eta(c,\textbf{u});\textbf{u},\textbf{v}_h)-a_{PL}(\eta(c_h,\textbf{u}_h);\textbf{u}_h,\textbf{v}_h)
\end{multline}
Applying the inclusion $\bigtriangledown \cdot V_h \subset Q_h$ and the property of the $L^2$ orthogonal projection of $I^h_p$ we have 
\begin{equation}
b(\textbf{v}_h, p-I^h_p p)= \int_{\Omega}(p-I^h_p p)(\bigtriangledown \cdot \textbf{v}_h)=0
\end{equation}
Now according to inf-sup condition we will have the following expression
\begin{equation}
\begin{split}
\|I^h_p p-p_h\|_{\textbf{N}}^2 = \|E_p^A\|_{\textbf{N}}^2 
& =  \sum_{n=0}^{N-1}  \|E_p^{A,n,\theta}\|^2 dt \\
& \leq  \sum_{n=0}^{N-1} \underset{\textbf{v}_h}{sup} \frac{b(\textbf{v}_h, E_p^{A,n,\theta})}{\|\textbf{v}_h\|_1} dt
\end{split}
\end{equation}
Applying (59) on (58) we have
\begin{equation}
\begin{split}
\sum_{n=0}^{N-1} b(\textbf{v}_h, E_p^{A,n,\theta}) dt & = \sum_{n=0}^{N-1} \{ (\frac{E^{A,n+1}_{\textbf{u}}-E^{A,n}_{\textbf{u}}}{dt},\textbf{v}_{h})+ c(E^{I,n}_{\textbf{u}},\textbf{u}^{n,\theta},\textbf{v}_h) +\\
& \quad  c(E^{A,n}_{\textbf{u}},\textbf{u}^{n,\theta},\textbf{v}_h)+  c(\textbf{u}_h^{n}, E^{I,n,\theta}_\textbf{u},\textbf{v}_h)+ c(\textbf{u}_h^{n}, E^{A,n,\theta}_\textbf{u},\textbf{v}_h)\\
& \quad +a_{PL}(\eta;\textbf{u},\textbf{v}_h)-  a_{PL}(\eta_h;\textbf{u}_h,\textbf{v}_h)\}dt
\end{split}
\end{equation}
Now applying the results obtained in the previous part and substituting that in (60) we have the estimate for the pressure term
\begin{equation}
\|I_hp-p_h\|_{\textbf{N}}^2 \leq C(T,\textbf{u},p,c) (h^2+dt^{2r})
\end{equation}
Now combining the results obtained in the first and second part we have finally arrived at the auxiliary error estimate as follows
\begin{equation}
\|E^A_{\textbf{u}}\|^2_{\textbf{M}} + \|E^A_p\|_{\textbf{N}}^2  + \|E^A_{c}\|^2_{\textbf{M}} \leq C(T,\textbf{u},p,c) (h^2+ dt^{2r})
\end{equation}
where
\begin{equation}
    r=
    \begin{cases}
      1, & \text{if}\ \theta=1 \\
      2, & \text{if}\ \theta=0
    \end{cases}
  \end{equation}
This completes the proof.
\end{proof}
\begin{theorem}(Apriori error estimate)
Assuming the same condition as in the previous theorem, 
\begin{equation}
\|\textbf{u}-\textbf{u}_h\|_{\textbf{M}}^2+\|p-p_h\|_{\textbf{N}}^2 + \|c-c_h\|^2_{\textbf{M}} \leq C' (h^2+ dt^{2r})
\end{equation}
where $C'$ depends on T, $\textbf{u}$,p,c and
\begin{equation}
    r=
    \begin{cases}
      1, & \text{if}\ \theta=1 \\
      2, & \text{if}\ \theta=0
    \end{cases}
  \end{equation}
\end{theorem}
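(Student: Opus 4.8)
The plan is to read off the estimate from the auxiliary error bound of the preceding theorem together with the standard interpolation estimate (27). Recall the component-wise splitting of Section 4.1, $e_{\textbf{u}}=E^{I}_{\textbf{u}}+E^{A}_{\textbf{u}}$, $e_p=E^{I}_p+E^{A}_p$ and $e_c=E^{I}_c+E^{A}_c$. Because each of the norms $\|\cdot\|_{\textbf{M}}$ and $\|\cdot\|_{\textbf{N}}$ is a (time-weighted) sum of spatial $L^2$ and $H^1$ norms evaluated at the discrete times $t^n$ and at the intermediate states $f^{n,\theta}$, the triangle inequality together with $(a+b)^2\le 2a^2+2b^2$ gives
\[
\|e_{\textbf{u}}\|_{\textbf{M}}^2\le 2\|E^{I}_{\textbf{u}}\|_{\textbf{M}}^2+2\|E^{A}_{\textbf{u}}\|_{\textbf{M}}^2,\qquad \|e_c\|_{\textbf{M}}^2\le 2\|E^{I}_c\|_{\textbf{M}}^2+2\|E^{A}_c\|_{\textbf{M}}^2,
\]
and $\|e_p\|_{\textbf{N}}^2\le 2\|E^{I}_p\|_{\textbf{N}}^2+2\|E^{A}_p\|_{\textbf{N}}^2$; it therefore suffices to bound the interpolation and auxiliary parts separately.

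First I would dispose of the auxiliary parts, which are controlled directly by the preceding theorem: $\|E^{A}_{\textbf{u}}\|_{\textbf{M}}^2+\|E^{A}_p\|_{\textbf{N}}^2+\|E^{A}_c\|_{\textbf{M}}^2\le \tilde C\,(h^2+dt^{2r})$ with $\tilde C$ depending on $T,\textbf{u},p,c$ and $r$ as in the statement. Note that the whole $dt^{2r}$ contribution to the final bound enters here, through the truncation-error terms estimated inside that proof; the interpolation parts carry no temporal error because they are purely spatial projections of the exact solution.

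Next I would bound the interpolation parts using (27) and the regularity assumption \textbf{(iii)}. Since $\textbf{u}\in L^{\infty}(0,T;(H^2(\Omega))^2)$ and $c\in L^{\infty}(0,T;H^2(\Omega))$, estimate (27) with $l=0$ gives $\|E^{I,n}_{v}\|\le Ch^2\|v^n\|_2$ and with $l=1$ gives $\|E^{I,n,\theta}_{v}\|_1\le Ch\|v^{n,\theta}\|_2$ for $v=\textbf{u},c$; squaring, substituting into the definition of the $\textbf{M}$-norm and using $dt\sum_{n=0}^{N-1}(\cdot)\le T\,\|\cdot\|_{L^{\infty}(0,T;\cdot)}$ yields $\|E^{I}_{\textbf{u}}\|_{\textbf{M}}^2+\|E^{I}_c\|_{\textbf{M}}^2\le C(h^2+h^4)(\|\textbf{u}\|^2_{L^{\infty}(0,T;H^2)}+\|c\|^2_{L^{\infty}(0,T;H^2)})=O(h^2)$. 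For the pressure, $p\in L^{\infty}(0,T;H^1(\Omega))$, so (27) with $l=0$ and regularity index one gives $\|E^{I,n,\theta}_p\|\le Ch\|p^{n,\theta}\|_1$; squaring and summing over the time steps with weight $dt$ gives $\|E^{I}_p\|_{\textbf{N}}^2\le Ch^2\|p\|^2_{L^{\infty}(0,T;H^1)}=O(h^2)$.

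Adding the auxiliary and interpolation bounds for all three unknowns and taking $C'$ to be the largest of the resulting constants gives the asserted inequality, with the same $r$ as in the statement. At this stage there is essentially no obstacle---the argument is bookkeeping, and the genuine analytic difficulty of the error analysis is concentrated in the auxiliary estimate of the preceding theorem. The only point that deserves a moment of care is the pressure: it has merely $H^1$-in-space regularity, so its interpolation error is only first order in $h$; this still suffices because the $\textbf{N}$-norm involves only the spatial $L^2$ norm, so after squaring one recovers the $h^2$ rate matching the velocity and concentration contributions.
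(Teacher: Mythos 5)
Your proposal is correct and follows essentially the same route as the paper: split each error into interpolation and auxiliary parts, invoke the auxiliary error estimate of the preceding theorem for the $E^A$ terms, and use the interpolation estimate (27) with the regularity assumption \textbf{(iii)} for the $E^I$ terms, combining via the triangle inequality. The paper states this in one line; you have merely filled in the bookkeeping (including the correct observation that the pressure's $H^1$ regularity still yields the $O(h^2)$ rate in the $\textbf{N}$-norm), so there is nothing to correct.
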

\begin{proof}
By applying triangle inequality, the interpolation inequalities and the result of the previous theorem we will have,
\begin{equation}
\begin{split}
& \|\textbf{u}-\textbf{u}_h\|_{\textbf{M}}^2+\|p-p_h\|_{\textbf{N}}^2 + \|c-c_h\|^2_{\textbf{M}} \\
& \leq \bar{C} (\|E^I_{\textbf{u}}\|_{\textbf{M}}^2 +\|E^I_{p}\|_{\textbf{N}}^2 +  \|E^I_{c}\|_{\textbf{M}}^2+ 
 \|E^A_{\textbf{u}}\|_{\textbf{M}}^2 +\|E^A_{p}\|_{\textbf{N}}^2 +  \|E^A_{c}\|_{\textbf{M}}^2\\
& \leq C'(T,\textbf{u},p,c)(h^2+ dt^{2r})
\end{split}
\end{equation}
This completes apriori error estimation.
\end{proof} 

\subsection{Aposteriori error estimation}
\begin{theorem} 
 Assume the time step $dt$ is sufficiently small and positive, and the coefficients in (1)-(4) satisfy the assumptions \textbf{(i)}-\textbf{(ii)}. Then for sufficiently regular continuous solutions $(\textbf{u},p,c)$ satisfying the assumptions \textbf{(iii)}-\textbf{(iv)} and computed solutions $(\textbf{u}_h,p_h,c_h)$ belonging to $V_h \times V_h \times Q_h \times V_h$ satisfying (19), there exists a constant $\bar{C}$, depending upon the computed solutions $\textbf{u}_h,p_h,c_h$ such that
\begin{equation}
 \|\textbf{u}-\textbf{u}_h\|_{\textbf{M}}^2+\|p-p_h\|_{\textbf{N}}^2 + \|c-c_h\|^2_{\textbf{M}} \leq \bar{C}(\textbf{R}) (h^2+ dt^{2r})
\end{equation}
where
\begin{equation}
    r=
    \begin{cases}
      1, & \text{if}\ \theta=1 \\
      2, & \text{if}\ \theta=0
    \end{cases}
  \end{equation}
\end{theorem}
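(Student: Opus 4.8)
The plan is to mirror the proof of the auxiliary error estimate (Theorem~2) step by step, the only structural change being that every bound which there produced a constant depending on norms of the \emph{exact} solution $(\textbf{u},p,c)$ is now rewritten so that the constant depends only on the \emph{computed} solution $(\textbf{u}_h,p_h,c_h)$ and the residual vector $\textbf{R}=\textbf{F}-M\partial_t\textbf{U}_h-\mathcal{L}(\textbf{u}_h,\eta(c_h,\textbf{u}_h);\textbf{U}_h)$. First I would again subtract the stabilized formulation (19) from the continuous weak form (7), split each error component into its interpolation and auxiliary parts via the projections (P1)--(P3), use Results~1 and~2, and test with the auxiliary error parts $(E^{A,n,\theta}_{\textbf{u}},E^{A,n,\theta}_p,E^{A,n,\theta}_c)$. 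The coercive lower bound on the left-hand side is \emph{unchanged} from (37): it comes from the coercivity of $a_{PL}$ through $\eta_l$ (with the same case split in the power-law index $m$ for shear-thinning versus shear-thickening), the coercivity of $a_{LT}$ through $D_l$, the positivity of the reaction term, and the telescoping time-difference estimate of Result~2.

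All the new work is on the right-hand side. The key device is that, by the strong form $M\partial_t\textbf{U}+\mathcal{L}(\textbf{u},\eta;\textbf{U})=\textbf{F}$ combined with the time-discretisation identity, the consistency combination $M\partial_t(\textbf{U}^n-\textbf{U}^n_h)+\mathcal{L}(\textbf{u}^n,\eta^n;\textbf{U}^{n,\theta})-\mathcal{L}(\textbf{u}^n_h,\eta^n_h;\textbf{U}^{n,\theta}_h)$ that sits inside $I_1,\dots,I_4$ and inside the dynamic-subscale series $\textbf{d}$ equals $\textbf{R}^{n,\theta}+\textbf{TE}^{n,\theta}$ modulo controllable terms; hence each $I_j$ is bounded by $|\tau_1|\,\bar{C}_1^{\,j}(\textbf{R},\textbf{u}_h,p_h)+|\tau_3|\,\bar{C}_3^{\,j}(\textbf{R},c_h)$, and the $\textbf{d}$-series still converges because $\frac{\rho\tau_1'}{dt}<1$ and $\frac{\tau_3'}{dt}<1$ as before. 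The bilinear terms, the trilinear terms $c(\cdot,\cdot,\cdot)$ and $a_{NLT}(\cdot,\cdot,\cdot)$, and the power-law term $a_{PL}(\eta^n-\eta^n_h;\textbf{u}^{n,\theta}_h,\cdot)$ are estimated with the same Cauchy--Schwarz / Young / Sobolev chain, except that whenever a factor $\|\textbf{u}\|_2$, $\|c\|_2$ or $\|p\|_1$ would have appeared I would use $\textbf{u}=\textbf{u}_h+E^I_{\textbf{u}}+E^A_{\textbf{u}}$ (and analogously for $p$ and $c$), the interpolation estimate (27), and the already-established smallness of $\|E^A\|$ from the first part of Theorem~2, to replace it by a computable norm of $\textbf{u}_h$ plus an $O(h^2+dt^{2r})$ remainder that is absorbed into the left-hand side.

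After collecting everything, moving the coercive terms to the left, multiplying by $2\,dt$, summing over $n=0,\dots,N-1$, choosing the parameters $\epsilon_i$ so that every left-hand coefficient is positive and dividing by their minimum, the truncation-error bounds (12) and the fact that $\tau_1,\tau_3=O(h^2)$ give $\|E^A_{\textbf{u}}\|^2_{\textbf{M}}+\|E^A_c\|^2_{\textbf{M}}\le\bar{C}(\textbf{R})(h^2+dt^{2r})$. The pressure auxiliary error is recovered exactly as in the second part of Theorem~2: Galerkin orthogonality on the momentum equation, the inclusion $\nabla\cdot V_h\subset Q_h$ which annihilates $b(\textbf{v}_h,p-I^h_pp)$, and the inf--sup condition yield $\|E^A_p\|^2_{\textbf{N}}\le\bar{C}(\textbf{R})(h^2+dt^{2r})$; a final triangle inequality with (27) then converts the three auxiliary bounds into the claimed estimate for $\|\textbf{u}-\textbf{u}_h\|^2_{\textbf{M}}+\|p-p_h\|^2_{\textbf{N}}+\|c-c_h\|^2_{\textbf{M}}$. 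The main obstacle is not any isolated inequality but the bookkeeping of the nonlinear terms: re-expressing the operator differences $\mathcal{L}(\textbf{u}^n,\eta^n;\textbf{U}^{n,\theta})-\mathcal{L}(\textbf{u}^n_h,\eta^n_h;\textbf{U}^{n,\theta}_h)$ and $\eta^n-\eta^n_h$ purely through $\textbf{R}$ and computed quantities without smuggling back an exact-solution-dependent constant, and verifying that the $O(h^2+dt^{2r})$ remainders generated by those substitutions are genuinely absorbed rather than circular --- which is exactly where the hypothesis that $dt$ is sufficiently small and the previously proved estimates of Theorems~2 and~3 are indispensable.
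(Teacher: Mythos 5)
Your proposal reuses the architecture of the apriori proof (subtract (19) from (7), test with the auxiliary errors $E^{A,n,\theta}$, keep the coercive left-hand side (37)) and then tries to launder the exact-solution-dependent constants into residual/computed-solution-dependent ones after the fact. The paper's proof is built differently, and the difference matters. The paper starts from the coercivity inequality (70) tested with the \emph{full} error $\textbf{e}=\textbf{U}-\textbf{U}_h\in\bar{\textbf{V}}$ (which is legitimate precisely because $\textbf{e}$ lives in the continuous space, not the finite element space), so the left-hand side directly controls $\|e_{\textbf{u}}^{n,\theta}\|_1$, $|e_c^{n,\theta}|_1$, $\|e_c^{n,\theta}\|$ with no final triangle inequality needed for $\textbf{u}$ and $c$. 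The right-hand side is then split as $RHS^I+RHS^A+RHS^E$ according to whether the error equation is tested with $E^{I,n,\theta}$ or $E^{A,n,\theta}$, and the residual enters through two distinct identities: the integrated-by-parts representation (74), valid for every $\textbf{V}\in\bar{\textbf{V}}$, which turns $RHS^I$ into $\int_\Omega(\textbf{R}_1\cdot E^I_{\textbf{u}}+R_2E^I_p+R_3E^I_c)\le h^2(\bar C_1\|\textbf{R}_1\|+\bar C_2\|R_2\|+\bar C_3\|R_3\|)$; and the discrete stabilized equation (76), valid for $\textbf{V}_h\in\bar{\textbf{V}}_h$, which turns $RHS^A$ into the stabilization pairings $(\tau_k'\textbf{R}^{n,\theta},\mathcal{L}^*(\cdot))_{\Omega_k}$ etc., each bounded by $|\tau_i|$ times a residual norm. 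The leftover nonlinear terms are collected in $RHS^E$ and absorbed into the coercive left-hand side.

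The genuine gap in your plan is the mechanism by which the Galerkin (non-stabilization) part of the right-hand side is supposed to acquire a residual-dependent bound. Your consistency identity $M\partial_t(\textbf{U}^n-\textbf{U}^n_h)+\mathcal{L}(\textbf{u}^n,\eta^n;\textbf{U}^{n,\theta})-\mathcal{L}(\textbf{u}^n_h,\eta^n_h;\textbf{U}^{n,\theta}_h)=\textbf{R}^{n,\theta}+\textbf{TE}^{n,\theta}$ correctly handles the terms $I_1,\dots,I_4$ and the series $\textbf{d}$ (this is exactly what the paper does in (76)--(79)), but it does nothing for the trilinear, power-law and advection-diffusion terms on the right of (33): for those you propose to write $\textbf{u}=\textbf{u}_h+E^I_{\textbf{u}}+E^A_{\textbf{u}}$, invoke the interpolation estimate (27) for $E^I$, and invoke Theorem~2 for $E^A$. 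Both substitutions reintroduce norms of the exact solution --- (27) produces $\|\textbf{u}\|_2$, $\|c\|_2$, $\|p\|_1$, and the constant $\tilde C$ of Theorem~2 depends on $(\textbf{u},p,c)$ by its own statement --- so the final constant would not be a function of $\textbf{R}$ and the computed solution as the theorem asserts; the argument is circular with respect to the aposteriori character of the bound. What is missing is the single step that makes the paper's proof work: the observation that the \emph{entire} Galerkin difference $\rho(\partial_t e_{\textbf{u}},\textbf{v})+(\partial_t e_c,d)+B(\textbf{u}^n,\eta^n;\textbf{U}^{n,\theta},\textbf{V})-B(\textbf{u}^n_h,\eta^n_h;\textbf{U}^{n,\theta}_h,\textbf{V})$ collapses to $\int_\Omega\textbf{R}^{n,\theta}\cdot\textbf{V}$ after integration by parts, so that no term ever needs to be estimated against the exact solution in the first place. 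Without that identity (or an equivalent), the route you describe does not deliver a constant $\bar C(\textbf{R})$.
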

\begin{proof}
We estimate $aposteriori$ error by dividing the procedure into two parts. In the first part we find error bound corresponding to $velocity$ and $concentration$ followed by the second part estimating error associated with the $pressure$ term. Let us first introduce the residual vector corresponding to each equations 
\[
\textbf{R}=
  \begin{bmatrix}
 \textbf{f}-\{ \rho \frac{\partial \textbf{u}_h}{\partial t} + \rho (\textbf{u}_h \cdot \bigtriangledown) \textbf{u}_h + \bigtriangledown p_h- \nabla \cdot 2 \eta_h \textbf{D}(\textbf{u}_h)  \} \\
    -\bigtriangledown \cdot \textbf{u}_h \\
 g-(\frac{\partial c_h}{\partial t} - \bigtriangledown \cdot \tilde{\bigtriangledown} c_h + \textbf{u} \cdot \bigtriangledown c_h + \alpha c_h )
  \end{bmatrix}
   = 
  \begin{bmatrix}
 \textbf{R}_1\\
  R_2 \\
  R_3
  \end{bmatrix}
\]
where $\textbf{f}=[f_1,f_2]^T$ \vspace{1mm}\\
\textbf{First part:} We have $\forall \hspace{1mm} \textbf{V}$ $\in \bar{\textbf{V}}$
\begin{equation}
\begin{split}
\eta_l \mid \textbf{v} \mid_1^2 + D_{l} \mid d \mid_1^2 + \alpha \|d\|^2 +2 \int_{\Omega} \eta(c,\textbf{u}) \frac{\partial v_1}{\partial y} \frac{\partial v_2}{\partial x} \leq B(\textbf{u},\eta(c,\textbf{u});\textbf{V},\textbf{V}) 
\end{split}
\end{equation}
Since $\textbf{e} \in \bar{\textbf{V}}$ therefore considering $\textbf{e}$ as a test function, adding few required terms to the both sides of (70) and applying $Poincare$ $Friedrics$ inequality on the first term in the left hand side we have modified (70) as follows:
\begin{multline}
\underbrace{\rho (\frac{\partial e_{\textbf{u}}}{\partial t},e_{\textbf{u}})+ (\frac{\partial e_{c}}{\partial t},e_{c})+\eta_l \| e_{\textbf{u}} \|_1^2+  D_{l} \mid e_c\mid_1^2 + \alpha \|e_c\|^2 }_\textit{LHS}\\
 \leq \underbrace{\rho (\frac{\partial e_{\textbf{u}}}{\partial t},e_{\textbf{u}})+ (\frac{\partial e_{c}}{\partial t},e_{c})+ B(\textbf{u},\eta(c,\textbf{u});\textbf{e},\textbf{e}) +
 2 \int_{\Omega} \mid \eta(c,\textbf{u})  \frac{\partial e_{u1}}{\partial y} \frac{\partial e_{u2}}{\partial x} \mid}_\textit{RHS}
\end{multline}
Outline of the proof is finding a lower bound of $\textit{LHS}$ and upper bound for $\textit{RHS}$ of (71) and finally combining them to arrive at the desired estimate. For this purpose applying (29) on the first two terms of the $\textit{LHS}$ after discretizing it with respect to time we have the lower bound as follows:
\begin{multline}
 \frac{\rho}{2 dt}(\|e_{\textbf{u}}^{n+1}\|^2-\|e_{\textbf{u}}^n\|^2)+\frac{1}{2 dt}(\|e_{c}^{n+1}\|^2-\|e_{c}^n\|^2)+\eta_l \| e_{\textbf{u}}^{n,\theta} \|_1^2+  D_{l} \mid e_c^{n,\theta}\mid_1^2 + \alpha \|e_c^{n,\theta}\|^2\\
  \leq \textit{LHS} \leq \textit{RHS}
\end{multline}
Now our aim is to find upper bound for $\textit{RHS}$ through dividing it into three broad parts by splitting the errors and rearranging the terms suitably for further derivation in the following way:
\begin{equation}
\begin{split}
RHS &= [\rho  (\frac{e_{\textbf{u}}^{n+1}-e_{\textbf{u}}^n}{dt}, E^{I,n,\theta}_{\textbf{u}})+(\frac{e_c^{n+1}-e_c^n}{dt},E^{I,n,\theta}_c)+ B(\textbf{u}^n,\eta^n; \textbf{U}^{n,\theta}, E^{I,n,\theta}_{\textbf{U}}) -\\
& \quad  B(\textbf{u}^n_h,\eta^n_h; \textbf{U}^{n,\theta}_h, E^{I,n,\theta}_{\textbf{U}})]+ [\rho  (\frac{e_{\textbf{u}}^{n+1}-e_{\textbf{u}}^n}{dt}, E^{A,n,\theta}_{\textbf{u}})+(\frac{e_c^{n+1}-e_c^n}{dt},E^{A,n,\theta}_c)+\\
& \quad B(\textbf{u}^n,\eta^n; \textbf{U}^{n,\theta}, E^{A,n,\theta}_{\textbf{U}}) - B(\textbf{u}^n_h,\eta^n_h; \textbf{U}^{n,\theta}_h, E^{A,n,\theta}_{\textbf{U}})] + [ 2 \int_{\Omega} \mid \eta^n   \frac{\partial e_{u1}^{n,\theta}}{\partial y} \frac{\partial e_{u2}^{n,\theta}}{\partial x} \mid\\
& \quad -c(e_{\textbf{u}}^{n},\textbf{u}_h^{n,\theta}, e_{\textbf{u}}^{n,\theta}) + a_{PL}(\eta^n-\eta_h^n; \textbf{u}^{n,\theta}_h, e_{\textbf{u}}^{n,\theta})-a_{NLT}(e_{\textbf{u}}^n, c_h^{n,\theta},e_c^{n,\theta})]\\
& = RHS^I+ RHS^A+RHS^E
\end{split}
\end{equation}
The superscripts $I,A,E$ stand for $Interpolation$, $Auxiliary$ and $Extra$ respectively since these three parts correspond to interpolation error, auxiliary error and the extra terms occurred as a consequence of splitting the terms in $\textit{RHS}$ in this way. Now our aim is to bring residual into context and for this purpose we have the following relation on integrating the terms in $B(\cdot,\cdot; \cdot, \cdot)$: $\forall$ $(\textbf{v},q,d) \in \bar{\textbf{V}}$
\begin{multline}
\rho (\frac{e_{\textbf{u}}^{n+1}- e_{\textbf{u}}^{n}}{dt}, \textbf{v})+  (\frac{ e_c^{n+1}-e_c^{n}}{dt}, d)+ B(\textbf{u}^n,\eta^n; \textbf{U}^{n,\theta}, \textbf{V})- B(\textbf{u}^n_h,\eta^n_h; \textbf{U}^{n,\theta}_h, \textbf{V})\\
= \int_{\Omega} \textbf{R}_1^{n,\theta} \cdot \textbf{v} + \int_{\Omega} R_2^{n,\theta} q+ \int_{\Omega} R_3^{n,\theta} d \hspace{20mm}
\end{multline}
On substituting $\textbf{v} ,q,d$ in the above expressions by $E^{I,n,\theta}_{\textbf{u}}, E^{I,n,\theta}_{p}, E^{I,n,\theta}_{c}$ respectively and applying the Cauchy-Schwarz inequality and assumption \textbf{(iii)} on the exact solutions we have the estimated result for $RHS^I$ as follows:
\begin{equation}
\begin{split}
RHS^I & = \int_{\Omega} (\textbf{R}_1^{n,\theta} E^{I,n,\theta}_{\textbf{u}}+ R_2^{n,\theta} E^{I,n,\theta}_{p}+ R_3^{n,\theta} E^{I,n,\theta}_{c} ) \leq h^2( \bar{C}_1 \|\textbf{R}_1^{n,\theta}\| + \bar{C}_2 \|R_2^{n,\theta}\|+ \bar{C}_3 \|R_3^{n,\theta}\|) \\
\end{split}
\end{equation}
The parameters $\bar{C}_i$, for i=1,2,3 are occurred due to  applying assumption  \textbf{(iii)} and the interpolation estimate (27). To estimate $RHS^A$ let us bring here (32) in the following way: $\forall$ $\textbf{V}_h \in V_h \times V_h \times Q_h \times V_h$
\begin{multline}
\rho (\frac{e_{\textbf{u}}^{n+1}- e_{\textbf{u}}^{n}}{dt}, \textbf{v}_h)+  (\frac{ e_c^{n+1}-e_c^{n}}{dt}, d_h)+ B(\textbf{u}^n,\eta^n; \textbf{U}^{n,\theta}, \textbf{V}_h)- B(\textbf{u}^n_h,\eta^n_h; \textbf{U}^{n,\theta}_h, \textbf{V}_h)\\
= \sum_{k=1}^{n_{el}} \{(\tau_k' \textbf{R}^{n,\theta}, \mathcal{L}^*(\textbf{u}_h,\eta_h;\textbf{V}_h))_{\Omega_k}+(\tau_k' \textbf{d},\mathcal{L}^*(\textbf{u}_h,\eta_h;\textbf{V}_h))_{\Omega_k} +  \\
((I-\tau_k^{-1}\tau_k) \textbf{R}^{n,\theta}, \textbf{V}_h)_{\Omega_k} +(\tau_k^{-1}\tau_k \textbf{d}, \textbf{V}_h)_{\Omega_k} \} +(\textbf{TE}^{n,\theta}, \textbf{V}_h)
\end{multline}
On substituting $\textbf{V}_h$ by $E^{A,n,\theta}_{\textbf{U}}$ in the above equation we have
\begin{equation}
\begin{split}
RHS^A & =  \sum_{k=1}^{n_{el}} \{(\tau_k' \textbf{R}^{n,\theta}, \mathcal{L}^*(\textbf{u}_h,\eta_h ;E^{A,n,\theta}_{\textbf{U}}))_{\Omega_k}+(\tau_k' \textbf{d},\mathcal{L}^*(\textbf{u}_h,\eta_h ;E^{A,n,\theta}_{\textbf{U}}))_{\Omega_k} +  \\
& \quad  ((I-\tau_k^{-1}\tau_k) \textbf{R}^{n,\theta}, E^{A,n,\theta}_{\textbf{U}})_{\Omega_k} +(\tau_k^{-1}\tau_k \textbf{d}, E^{A,n,\theta}_{\textbf{U}})_{\Omega_k} \} +(\textbf{TE}^{n,\theta},E^{A,n,\theta}_{\textbf{U}})
\end{split}
\end{equation}
The estimation of the terms in $RHS^A$ follows the same way as we have done in the derivation of auxiliary $apriori$ error estimate. Here we use the observation made upon the bounded property of the elements belonging to finite element spaces $V_h$ and $Q_h$ over each element sub-domain $\Omega_k$ for $k=1,2,...,n_{el}$. On expanding each of the terms and applying that observation on the auxiliary error parts and applying assumption \textbf{(iv)} on the viscosity expression, we have the estimations as follows:
\begin{equation}
\begin{split}
 \sum_{k=1}^{n_{el}} (\tau_k' \textbf{R}^{n,\theta}, \mathcal{L}^*(\textbf{u}_h,\eta_h ;E^{A,n,\theta}_{\textbf{U}}))_{\Omega_k} & \leq  \mid \tau_1 \mid \tilde{C}_1^1 \|\textbf{R}_1^{h,n,\theta}\|+ \mid \tau_2 \mid \epsilon_1' \|e_{\textbf{u}}^{n,\theta}\|_1^2 +\\
 & \quad h^2 \frac{ \mid \tau_2 \mid}{ \epsilon_1' } \tilde{C}_1^2 + \mid \tau_3 \mid \tilde{C}_1^3 \|\textbf{R}_3^{h,n,\theta}\| \\
 ((I-\tau_k^{-1}\tau_k) \textbf{R}^{n,\theta}, E^{A,n,\theta}_{\textbf{U}})_{\Omega_k} & \leq  \mid \tau_1 \mid \tilde{C}_2^1 \|\textbf{R}_1^{n,\theta}\| + \mid \tau_3 \mid \tilde{C}_2^3 \|\textbf{R}_3^{n,\theta}\|
\end{split}
\end{equation}
In the next terms the matrix $\textbf{d}$= $\sum_{i=1}^{n+1}(\frac{1}{dt}M\tau_k')^i(\textbf{F} -M\partial_t \textbf{U}_h - \mathcal{L}(\textbf{u}_h;\textbf{U}_h))=\sum_{i=1}^{n+1}(\frac{1}{dt}M\tau_k')^i \textbf{R}$. Therefore it is clear from this expression that the next two terms can be estimated in the above mentioned way and their estimated results are as follows:
\begin{equation}
\begin{split}
(\tau_k' \textbf{d},\mathcal{L}^*(\textbf{u}_h,\eta_h ;E^{A,n,\theta}_{\textbf{U}}))_{\Omega_k} & \leq \mid \tau_1 \mid \tilde{C}_3^1 \|\textbf{R}_1^{n,\theta}\| + \mid \tau_3 \mid \tilde{C}_3^3 \|\textbf{R}_3^{n,\theta}\| \\
  \sum_{k=1}^{n_{el}}(\tau_k^{-1} \tau_k' \textbf{d},E^{A,n,\theta}_{\textbf{U}})_{k} 
 & \leq \mid \tau_1 \mid \tilde{C}_4^1 \|\textbf{R}_1^{h,n,\theta}\| + \mid \tau_3 \mid \tilde{C}_4^3 \|\textbf{R}_3^{h,n,\theta}\|
\end{split}
\end{equation}
where the parameters $\tilde{C}_1^i, \tilde{C}_3^i$ for $i=1,...,4$ contain bounds of auxiliary error part of each variable over each sub-domain. In the next step we have carried out the estimation of the terms in $RHS^E$ in the same way as we have proceeded in the earlier section during the proof of Theorem 2.
\begin{equation}
\begin{split}
RHS^E & \leq  2 \int_{\Omega} \mid \eta^n   \frac{\partial e_{u1}^{n,\theta}}{\partial y} \frac{\partial e_{u2}^{n,\theta}}{\partial x} \mid + \mid c(e_{\textbf{u}}^{n},\textbf{u}_h^{n,\theta}, e_{\textbf{u}}^{n,\theta}) \mid + \mid a_{PL}(\eta^n-\eta_h^n; \textbf{u}^{n,\theta}_h, e_{\textbf{u}}^{n,\theta}) \mid + \\
& \quad \mid a_{NLT}(e_{\textbf{u}}^n, c_h^{n,\theta},e_c^{n,\theta}) \mid \leq (C_1'+C_2' \eta_s)\|e^{n,\theta}_{\textbf{u}}\|_1^2
\end{split}
\end{equation}
where $C_i'$ for i=1,2 are parameters dependent upon the computed solutions.
Now the terms involving truncation error has to be estimated in slightly different way as we have done in the previous section.  Applying the Cauchy-Schwarz and Young's inequality we have the estimation in the following way:
\begin{equation}
\begin{split}
(\textbf{TE}^{n,\theta}, E^{A,n,\theta}_{\textbf{U}}) & = (\textbf{TE}^{n,\theta}, \textbf{e}^{n,\theta}) - (\textbf{TE}^{n,\theta}, E^{I,n,\theta}_\textbf{U}) \\
& \leq \frac{1}{\epsilon_2'} \|\textbf{TE}^{n,\theta}\|^2 + \frac{\epsilon_2'}{2} (\|\textbf{e}^{n,\theta}\|^2 + \|E^{I,n,\theta}_{\textbf{U}} \|^2) \\
& \leq  \frac{1}{\epsilon_2'} \|\textbf{TE}^{n,\theta}\|^2 +  \frac{\epsilon_2'}{2} \{ \|\textbf{e}^{n,\theta}\|^2 + h^4 (\frac{1+\theta}{2} \|\textbf{U}^{n+1}\|_2 + \frac{1-\theta}{2} \|\textbf{U}^n\|_2)^2 \} \\
& \leq \frac{1}{\epsilon_2'} \|\textbf{TE}^{n,\theta}\|^2 +  \frac{\epsilon_2'}{2} \|\textbf{e}^{n,\theta}\|^2_1 +  h^4 \frac{\epsilon_2'}{2} \bar{C}_5
\end{split}
\end{equation}
Let us now combine all the estimated results (73)-(81) in (72) and multiplying both sides by $2 dt$ and  taking summation over the time steps for $n=0,...,(N-1)$, we finally have
\begin{multline}
\rho \sum_{n=0}^{N-1} ( \|e^{n+1}_{\textbf{u}}\|^2-\|e^{n}_{\textbf{u}}\|^2)+\sum_{n=0}^{N-1} ( \|e^{n+1}_c\|^2-\|e^{n}_c\|^2)+ (2 \eta_l-C_1'-C_2'\eta_s-\epsilon_1' \mid \tau_2 \mid- \epsilon_2')\\
 \sum_{n=0}^{N-1} \|e^{n,\theta}_{\textbf{u}}\|_1^2 dt+ (2D_l-\epsilon_2') \sum_{n=0}^{N-1} \mid e^{n,\theta}_{c}\mid_1^2 dt+ (2 \alpha-\epsilon_2') \sum_{n=0}^{N-1} \|e^{n,\theta}_{c}\|^2 dt \\
 \leq h^2 \sum_{n=0}^{N-1}( \bar{C}_1 \|\textbf{R}_1^{n,\theta}\| + \bar{C}_2 \|R_2^{n,\theta}\|+ \bar{C}_3 \|R_3^{n,\theta}\| + \frac{ \mid \tau_2 \mid}{ \epsilon_1' } \tilde{C}_1^2+  h^2 \frac{\epsilon_2'}{2} \bar{C}_5) dt +\\
 \quad  \mid \tau_1 \mid  \sum_{n=0}^{N-1} \sum_{i=1}^4 \tilde{C}_1^i \|\textbf{R}_1^{n,\theta} \| dt + \mid \tau_3 \mid  \sum_{n=0}^{N-1} \sum_{i=1}^4 \tilde{C}_3^i \|R_3^{n,\theta} \| dt +\frac{1}{\epsilon_2'}  \sum_{n=0}^{N-1} \|\textbf{TE}^{n,\theta}\|^2 dt
\end{multline}
Choose the arbitrary parameters in such a way that all the coefficients in the left hand side can be made positive.
Then taking minimum over the coefficients in the left hand side let us divide both sides by them. Using property (3.7) associated with both implicit time discretisation scheme and the fact that $\tau_1, \tau_3$ are of order $h^2$, we have arrived at the following relation:
\begin{equation}
\boxed{\|\textbf{u}-\textbf{u}_h\|_{\textbf{M}}^2  + \|c-c_h\|_{\textbf{M}}^2 \leq C'(\textbf{R}) (h^2+dt^{2r})}
\end{equation}
where
\begin{equation}
    r=
    \begin{cases}
      1, & \text{if}\ \theta=1 \hspace{1mm} for  \hspace{1mm} backward  \hspace{1mm} Euler  \hspace{1mm} rule \\
      2, & \text{if}\ \theta=0  \hspace{1mm} for  \hspace{1mm} Crank-Nicolson  \hspace{1mm} scheme
    \end{cases}
  \end{equation}
  This only completes one part of $aposteriori$ estimation and in the next part we combine the corresponding pressure part.\vspace{2mm}\\
\textbf{Second part:} Using (59) we can rewrite (58)
\begin{equation}
\begin{split}
b(\textbf{v}_h, I^h_p p-p_h) & = (\frac{\partial e_{\textbf{u}}}{\partial t}, \textbf{v}_h)+ c(e_{\textbf{u}},\textbf{u},\textbf{v}_h)+ c(\textbf{u}_h, e_{\textbf{u}}, \textbf{v}_h) + a_{PL}(\eta; \textbf{u},\textbf{v}_h)-a_{PL}(\eta_h; \textbf{u}_h,\textbf{v}_h)
\end{split}
\end{equation}
On discretizing with respect to time and rearranging the terms in the following way to bring the error part in derivation we have
\begin{equation}
\begin{split}
\sum_{n=0}^{N-1} b(\textbf{v}_h, E^{A,n,\theta}_p) dt & = \sum_{n=0}^{N-1} \{ \frac{e_{\textbf{u}}^{n+1}-e_{\textbf{u}}^n}{dt}+ c(e_{\textbf{u}}^n,\textbf{u}^{n,\theta}, \textbf{v}_h)+  c(\textbf{u}^n,\textbf{u}^{n,\theta}, \textbf{v}_h)-   c(e_{\textbf{u}}^n,e_{\textbf{u}}^{n,\theta}, \textbf{v}_h)+\\
& \quad a_{PL}( \eta^n-\eta_h^n; \textbf{u}^{n,\theta}, \textbf{v}_h) + a_{PL}(\eta_h^n; e_{\textbf{u}}^{n\theta}, \textbf{v}_h)\}
\end{split}
\end{equation}
Now applying $Cauchy-Schwarz$ inequality, $Young$s inequality, property \textbf{(b)} of the $trilinear$ form $c(\cdot, \cdot, \cdot)$ and the above result (83) during the process of derivation we finally have the following result:
\begin{equation}
\sum_{n=0}^{N-1}  b(\textbf{v}_h, E_p^{A,n,\theta}) dt \leq \bar{C}'(\textbf{R})(h^2+ dt^{2r}) \|\textbf{v}_h\|_1
\end{equation}
Using this estimate equation (60) becomes
\begin{equation}
\|I_hp-p_h\|^2_{L^2(L^2)} \leq \bar{C}''(\textbf{R})(h^2+ dt^{2r})
\end{equation} 
Now combining the results obtained in the first and second part and applying the interpolation estimate (27) on pressure interpolation term $E^I_p$, we finally arrive at the following
\begin{equation}
\boxed{\|\textbf{u}-\textbf{u}_h\|^2_{\textbf{M}} + \|p-p_h\|_{\textbf{N}}^2  + \|c-c_h\|^2_{\textbf{M}} \leq \bar{C}(\textbf{R}) (h^2+ dt^{2r})}
\end{equation} 
Now this finally completes derivation of $aposteriori$ error estimation. 
\end{proof}
\begin{remark}
These estimations clearly imply that the scheme is $first$ order convergent in space with respect to total norm, whereas in time it is $first$ order convergent for backward Euler time discretization scheme and $second$ order convergent for Crank-Nicolson method.
\end{remark}

\section{Numerical experiment}
This section verifies credibility of the time dependent $ASGS$ method for the coupled system (1)-(4) through a comparative study between few of the stabilized finite element methods. We have carried out this comparison among  the $time$ $independent$ and $time$ $dependent$ $ASGS$ methods based on their numerical performances for different range of Reynolds number. The tabular representations of the numerical data make the comparison more understandable and play an important role to draw a remarkable conclusion at the end. We have divided this section further into two sub sections based on the nature of coupling of the system of equations to include all possible cases of the study. \vspace{1mm}\\
Let us take $\Omega$ to be a square bounded domain (0,1) $\times$ (0,1).   Piecewise continuous linear finite element space is considered for approximating all three variables: velocity, pressure and concentration. Here we need to mention the definitions of the errors with respect to which the performance of the method has been verified. \vspace{1mm}\\
Let $e_{\textbf{u}}=\sqrt{\|\textbf{u}-\textbf{u}_h\|^2_{\textbf{M}}} $, $e_p=\sqrt{ \|p-p_h\|^2_{\textbf{N}}}$ and $e_c= \sqrt{\|c-c_h\|^2_{\textbf{M}}}$, where the respective norms are already defined in section 4.2 and $Total$ indicates sum of all these errors with respect to their respective norms. \vspace{1mm}\\
 The exact solutions for all the cases are taken as follows: \vspace{1mm}\\
$\textbf{u}=(e^{-t} x^2(x-1)^2y(y-1)(2y-1), -e^{-t}x(x-1)(2x-1)y^2(y-1)^2 )$, \vspace{1mm} \\
 $p=e^{-t}(3x^2+3y^2-2)$ and $c=e^{-t} x y (x-1)(y-1)$ 
 \subsection{One way coupling}
 In this case we consider constant viscosity coefficient which implies $\eta$ is independent of the concentration $c$ of the solute. This concentration makes the Reynolds number ($Re$) \textcolor{blue}{to} play a key role in determining the nature of the flow. Depending upon the range of $Re$ we have performed numerical experiments particularly for two different values of $Re$ to provide a rough idea about the performances of the finite element methods. In the following cases we have considered constant diffusion coefficients, $D_1=D_2=0.01$ too and the reaction coefficient is $\alpha=0.01$.  The tables mainly highlight numerical performances of $time$ $independent$ and $time$ $dependent$ $ASGS$ method for this case. \vspace{1mm}\\
 \textbf{(a) Small Reynolds number:} Keeping wide range of $Re$ in mind we have chosen $Re$=1000 to indicate its smaller value. Here we have presented the errors and order of convergences corresponding to each of the variables, such as velocity, pressure and concentration with respect to the specified norms for different values of power law indices $m=1.5,1.0$  and 0.5 under $time$ $independent$ $ASGS$ method in table 1, 3 and 5 respectively. Table 2, 4 and 6 present the same for $time$ $dependent$ $ASGS$ method. Slightly better convergence rate is obtained corresponding to velocity error ($e_{\textbf{u}}$) in the case of $m=1.5$ for $time$ $dependent$ $ASGS$ method, whereas both methods perform equally well in approximating concentration and pressure for all three values of $m$.  \vspace{1mm}\\
\textbf{(b) Large Reynolds number} In this case we have considered comparatively much higher value of $Re=50000$ in order to provide a brief idea about convergence results of the methods for wide range of $Re$. Table 7, 9 and 11 show the errors and rate of convergences under $time$ $independent$ $ASGS$ method for $m=1.5,1.0$ and 0.5 respectively whereas table 8,10 and 12 present the same for $time$ $dependent$ $ASGS$ method. Notably $time$ $dependent$ $ASGS$ method admits far better order of convergence results corresponding to velocity error  in compared with the $time$ $independent$ one for all the three power law indices. Though in this case too both the methods perform equally well in approximating concentration and pressure, due to dominance of pressure error in calculating the total error, the poor performance of $time$ $independent$ $ASGS$ method in approximating velocity is not reflected in the sum. \vspace{1mm}\\
\subsection{Strong coupling}
Here the viscosity coefficient is chosen to be concentration dependent \cite{21}. We have also considered variable diffusion coefficients, $D_1=exp(-t)y^2(y-1)^2(2y-1)^2x^4(x-1)^4$ and $D_2= exp(-t)x^2(x-1)^2(2x-1)^2y^4(y-1)^4$ in this case. In the above section the $time$ $dependent$ $ASGS$ method has shown better performance among the other stabilized methods for all the combinations of Reynolds number and Power law index. Hence we have only examined its performance for strongly coupled system. Table 13, 14 and 15 contain convergence results for this method for $m=1.5,1.0$ and 0.5 respectively.

\begin{table}[]
\centering
\begin{tabular}{|p{5mm}|p{5mm}|p{12mm}|p{10mm}|p{12mm}|p{10mm}|p{12mm}|p{10mm}|p{12mm}|p{10mm}|}
    \hline
  $dt$ & $\frac{1}{h}$ & $e_{\textbf{u}}$ & RoC & $e_c$& RoC & $e_p$ & RoC &Total & RoC\\ [1mm]
 \hline
$\frac{1}{10}$& 10 & 8.61$e^{-3}$ &  & 3.43$e^{-3}$ & & 1.58$e^{-1}$& &1.58$e^{-1}$ & \\[1mm]
$ \frac{1}{20}$&   20 & 6.19$e^{-3}$ & 0.476  & 1.71$e^{-3}$ & 1.006 & 8.32$e^{-2}$ & 0.927 & 8.35$e^{-2}$  & 0.925\\  [1mm]
$  \frac{1}{40}$&   40 & 3.75$e^{-3}$ & 0.721  & 8.85$e^{-4}$ & 0.948 &  4.30$e^{-2}$ & 0.951 & 4.32$e^{-2}$  & 0.950 \\   [1mm]
$ \frac{1}{80}$ &  80 & 2.30$e^{-3}$ & 0.705  & 4.44$e^{-4}$ & 0.994 & 1.11$e^{-2}$& 0.973 & 2.21$e^{-2}$ & 0.970 \\ 
    \hline      
\end{tabular}
\caption{ Errors and order of convergences under $time$ $independent$ $ASGS$ method for $Re$=1000 and $m=1.5$ at $T=1$}
    \end{table}

\begin{table}
\centering
\begin{tabular}{|p{5mm}|p{5mm}|p{12mm}|p{10mm}|p{12mm}|p{10mm}|p{12mm}|p{10mm}|p{12mm}|p{10mm}|}
    \hline
  $dt$ & $\frac{1}{h}$ & $e_{\textbf{u}}$ & RoC & $e_c$& RoC & $e_p$ & RoC &Total & RoC\\ [1mm]
 \hline
$\frac{1}{10}$& 10 & 5.67$e^{-3}$ &  & 2.42$e^{-3}$ & & 1.58$e^{-1}$& &1.58$e^{-1}$ & \\[1mm]
$ \frac{1}{20}$&   20 & 3.28$e^{-3}$ & 0.777  & 1.31$e^{-3}$ & 0.887 & 8.32$e^{-2}$ & 0.927 & 8.32$e^{-2}$  &0.927\\  [1mm]
$  \frac{1}{40}$&   40 & 1.84$e^{-3}$ & 0.823  & 6.95$e^{-4}$ & 0.912 &  4.30$e^{-2}$ & 0.951& 4.30$e^{-2}$  &0.951 \\   [1mm]
$ \frac{1}{80}$ &  80 & 9.91$e^{-4}$ & 0.867  & 3.47$e^{-4}$ & 1.003 &2.19$e^{-2}$& 0.973 & 2.19$e^{-2}$ &0.973\\ 
    \hline      
\end{tabular}
\caption{  Errors and order of convergences under $time$ $dependent$ $ASGS$ method for $Re$=1000 and $m=1.5$ at $T=1$}
\end{table}

\begin{table}[]
\centering
\begin{tabular}{|p{5mm}|p{5mm}|p{12mm}|p{10mm}|p{12mm}|p{10mm}|p{12mm}|p{10mm}|p{12mm}|p{10mm}|}
    \hline
  $dt$ & $\frac{1}{h}$ & $e_{\textbf{u}}$ & RoC & $e_c$& RoC & $e_p$ & RoC &Total & RoC\\ [1mm]
 \hline
$\frac{1}{10}$& 10 & 7.60$e^{-3}$ &  & 3.43$e^{-3}$ & & 1.58$e^{-1}$& &1.58$e^{-1}$ & \\[1mm]
$ \frac{1}{20}$&   20 & 3.87$e^{-3}$ & 0.974  & 1.71$e^{-3}$ & 1.006 & 8.32$e^{-2}$ & 0.927 & 8.35$e^{-2}$  & 0.927\\  [1mm]
$  \frac{1}{40}$&   40 & 1.85$e^{-3}$ & 1.06  & 8.85$e^{-4}$ & 0.948 &  4.30$e^{-2}$ & 0.951 & 4.32$e^{-2}$  & 0.951 \\   [1mm]
$ \frac{1}{80}$ &  80 & 7.94$e^{-4}$ & 1.22  & 4.44$e^{-4}$ & 0.994 & 2.19$e^{-2}$& 0.973 & 2.19$e^{-2}$ & 0.973 \\ 
    \hline      
\end{tabular}
\caption{ Errors and order of convergences under $time$ $independent$ $ASGS$ method for $Re$=1000 and $m=1.0$ at $T=1$}
    \end{table}

\begin{table}[]
\centering
\begin{tabular}{|p{5mm}|p{5mm}|p{12mm}|p{10mm}|p{12mm}|p{10mm}|p{12mm}|p{10mm}|p{12mm}|p{10mm}|}
    \hline
  $dt$ & $\frac{1}{h}$ & $e_{\textbf{u}}$ & RoC & $e_c$& RoC & $e_p$ & RoC &Total & RoC\\ [1mm]
 \hline
$\frac{1}{10}$& 10 & 5.43$e^{-3}$ &  & 2.42$e^{-3}$ & & 1.58$e^{-1}$& &1.58$e^{-1}$ & \\[1mm]
$ \frac{1}{20}$&   20 & 2.82$e^{-3}$ & 0.947  & 1.31$e^{-3}$ & 0.887 & 8.32$e^{-2}$ & 0.927 & 8.33$e^{-2}$  &0.927\\  [1mm]
$  \frac{1}{40}$&   40 & 1.46$e^{-3}$ & 0.946  & 6.96$e^{-4}$ & 0.912 &  4.31$e^{-2}$ & 0.951& 4.31$e^{-2}$  &0.951 \\   [1mm]
$ \frac{1}{80}$ &  80 & 6.82$e^{-3}$ & 1.10  & 3.47$e^{-4}$ & 1.003 &2.19$e^{-2}$& 0.973 & 2.19$e^{-2}$ &0.973\\ 
    \hline      
\end{tabular}
\caption{  Errors and order of convergences under $time$ $dependent$ $ASGS$ method for $Re$=1000 and $m=1.0$ at $T=1$}
    \end{table}

\begin{table}[]
\centering
\begin{tabular}{|p{5mm}|p{5mm}|p{12mm}|p{10mm}|p{12mm}|p{10mm}|p{12mm}|p{10mm}|p{12mm}|p{10mm}|}
    \hline
  $dt$ & $\frac{1}{h}$ & $e_{\textbf{u}}$ & RoC & $e_c$& RoC & $e_p$ & RoC &Total & RoC\\ [1mm]
 \hline
$\frac{1}{10}$& 10 & 8.39$e^{-3}$ &  & 3.43$e^{-3}$ & & 1.58$e^{-1}$& &1.58$e^{-1}$ & \\[1mm]
$ \frac{1}{20}$&   20 & 5.34$e^{-3}$ & 0.651  & 1.71$e^{-3}$ & 1.006 & 8.32$e^{-2}$ & 0.927 & 8.34$e^{-2}$  & 0.926 \\  [1mm]
$  \frac{1}{40}$&   40 & 2.88$e^{-3}$ & 0.894  & 8.85$e^{-4}$ & 0.948 &  4.30$e^{-2}$ & 0.951 & 4.32$e^{-2}$  & 0.951 \\   [1mm]
$ \frac{1}{80}$ &  80 & 1.45$e^{-3}$ & 0.991  & 4.44$e^{-4}$ & 0.994 & 1.11$e^{-2}$& 0.973 & 2.20$e^{-2}$ & 0.973 \\ 
    \hline      
\end{tabular}
\caption{  Errors and order of convergences under $time$ $independent$ $ASGS$ method for $Re$=1000 and $m=0.5$ at $T=1$}
    \end{table}

\begin{table}[]
\centering
\begin{tabular}{|p{5mm}|p{5mm}|p{12mm}|p{10mm}|p{12mm}|p{10mm}|p{12mm}|p{10mm}|p{12mm}|p{10mm}|}
    \hline
  $dt$ & $\frac{1}{h}$ & $e_{\textbf{u}}$ & RoC & $e_c$& RoC & $e_p$ & RoC &Total & RoC\\ [1mm]
 \hline
$\frac{1}{10}$& 10 & 5.62$e^{-3}$ &  & 2.42$e^{-3}$ & & 1.58$e^{-1}$& &1.58$e^{-1}$ & \\[1mm]
$ \frac{1}{20}$&   20 & 3.17$e^{-3}$ & 0.825  & 1.31$e^{-3}$ & 0.887 & 8.32$e^{-2}$ & 0.927 & 8.33$e^{-2}$  &0.927\\  [1mm]
$  \frac{1}{40}$&   40 & 1.74$e^{-3}$ & 0.863  & 6.95$e^{-4}$ & 0.912 &  4.30$e^{-2}$ & 0.951& 4.31$e^{-2}$  &0.951 \\   [1mm]
$ \frac{1}{80}$ &  80 & 8.92$e^{-4}$ & 0.967  & 3.47$e^{-4}$ & 1.003 &2.19$e^{-2}$& 0.973 & 2.19$e^{-2}$ &0.973\\ 
    \hline      
\end{tabular}
\caption{  Errors and order of convergences under $time$ $dependent$ $ASGS$ method for $Re$=1000 and $m=0.5$ at $T=1$}
    \end{table}

\begin{table}[]
\centering
\begin{tabular}{|p{5mm}|p{5mm}|p{12mm}|p{10mm}|p{12mm}|p{10mm}|p{12mm}|p{10mm}|p{12mm}|p{10mm}|}
    \hline
  $dt$ & $\frac{1}{h}$ & $e_{\textbf{u}}$ & RoC & $e_c$& RoC & $e_p$ & RoC &Total & RoC\\ [1mm]
 \hline
$\frac{1}{10}$& 10 & 8.67$e^{-3}$ &  & 3.43$e^{-3}$ & & 1.58$e^{-1}$& &1.58$e^{-1}$ & \\[1mm]
$ \frac{1}{20}$&   20 & 6.49$e^{-3}$ & 0.418  & 1.71$e^{-3}$ & 1.006 & 8.32$e^{-2}$ & 0.927 & 8.35$e^{-2}$  & 0.925\\  [1mm]
$  \frac{1}{40}$&   40 & 4.24$e^{-3}$ & 0.611  & 8.85$e^{-4}$ & 0.948 &  4.30$e^{-2}$ & 0.951 & 4.32$e^{-2}$  & 0.950 \\   [1mm]
$ \frac{1}{80}$ &  80 & 3.16$e^{-3}$ & 0.428  & 2.26$e^{-4}$ & 0.994 & 1.11$e^{-2}$& 0.973 & 1.13$e^{-2}$ & 0.965 \\ 
    \hline      
\end{tabular}
\caption{ Errors and order of convergences under $time$ $independent$ $ASGS$ method for $Re$=50000 and $m=1.5$ at $T=1$}
    \end{table}

\begin{table}[]
\centering
\begin{tabular}{|p{5mm}|p{5mm}|p{12mm}|p{10mm}|p{12mm}|p{10mm}|p{12mm}|p{10mm}|p{12mm}|p{10mm}|}
    \hline
  $dt$ & $\frac{1}{h}$ & $e_{\textbf{u}}$ & RoC & $e_c$& RoC & $e_p$ & RoC &Total & RoC\\ [1mm]
 \hline
$\frac{1}{10}$& 10 & 5.67$e^{-3}$ &  & 2.42$e^{-3}$ & & 1.58$e^{-1}$& &1.58$e^{-1}$ & \\[1mm]
$ \frac{1}{20}$&   20 & 3.31$e^{-3}$ & 0.777  & 1.31$e^{-3}$ & 0.887 & 8.32$e^{-2}$ & 0.927 & 8.33$e^{-2}$  &0.927\\  [1mm]
$  \frac{1}{40}$&   40 & 1.87$e^{-3}$ & 0.823  & 6.95$e^{-4}$ & 0.912 &  4.30$e^{-2}$ & 0.951& 4.31$e^{-2}$  &0.951 \\   [1mm]
$ \frac{1}{80}$ &  80 & 1.02$e^{-3}$ & 0.867  & 3.47$e^{-4}$ & 1.003 &2.19$e^{-2}$& 0.973 & 2.19$e^{-2}$ &0.973\\ 
    \hline      
\end{tabular}
\caption{  Errors and order of convergences under $time$ $dependent$ $ASGS$ method for $Re$=50000 and $m=1.5$ at $T=1$}
    \end{table}

\begin{table}[]
\centering
\begin{tabular}{|p{5mm}|p{5mm}|p{12mm}|p{10mm}|p{12mm}|p{10mm}|p{12mm}|p{10mm}|p{12mm}|p{10mm}|}
    \hline
  $dt$ & $\frac{1}{h}$ & $e_{\textbf{u}}$ & RoC & $e_c$& RoC & $e_p$ & RoC &Total & RoC\\ [1mm]
 \hline
$\frac{1}{10}$& 10 & 8.67$e^{-3}$ &  & 3.43$e^{-3}$ & & 1.58$e^{-1}$& &1.58$e^{-1}$ & \\[1mm]
$ \frac{1}{20}$&   20 & 6.46$e^{-3}$ & 0.418  & 1.71$e^{-3}$ & 1.006 & 8.32$e^{-2}$ & 0.927 & 8.35$e^{-2}$  & 0.925\\  [1mm]
$  \frac{1}{40}$&   40 & 4.20$e^{-3}$ & 0.611  & 8.85$e^{-4}$ & 0.948 &  4.30$e^{-2}$ & 0.951 & 4.32$e^{-2}$  & 0.950 \\   [1mm]
$ \frac{1}{80}$ &  80 & 3.04$e^{-3}$ & 0.428  & 4.44$e^{-4}$ & 0.994 & 2.19$e^{-2}$& 0.973 & 2.21$e^{-2}$ & 0.965 \\ 
    \hline      
\end{tabular}
\caption{  Errors and order of convergences under $time$ $independent$ $ASGS$ method for $Re$=50000 and $m=1.0$ at $T=1$}
    \end{table}

\begin{table}[]
\centering
\begin{tabular}{|p{5mm}|p{5mm}|p{12mm}|p{10mm}|p{12mm}|p{10mm}|p{12mm}|p{10mm}|p{12mm}|p{10mm}|}
    \hline
  $dt$ & $\frac{1}{h}$ & $e_{\textbf{u}}$ & RoC & $e_c$& RoC & $e_p$ & RoC &Total & RoC\\ [1mm]
 \hline
$\frac{1}{10}$& 10 & 5.67$e^{-3}$ &  & 2.42$e^{-3}$ & & 1.58$e^{-1}$& &1.58$e^{-1}$ & \\[1mm]
$ \frac{1}{20}$&   20 & 3.31$e^{-3}$ & 0.777  & 1.31$e^{-3}$ & 0.887 & 8.32$e^{-2}$ & 0.927 & 8.33$e^{-2}$  &0.927\\  [1mm]
$  \frac{1}{40}$&   40 & 1.87$e^{-3}$ & 0.824  & 6.95$e^{-4}$ & 0.912 &  4.30$e^{-2}$ & 0.951& 4.31$e^{-2}$  &0.951 \\   [1mm]
$ \frac{1}{80}$ &  80 & 1.02$e^{-3}$ & 0.869  & 3.47$e^{-4}$ & 1.003 &2.19$e^{-2}$& 0.973 & 2.19$e^{-2}$ &0.973\\ 
    \hline      
\end{tabular}
\caption{  Errors and order of convergences under $time$ $dependent$ $ASGS$ method for $Re$=50000 and $m=1.0$ at $T=1$}
    \end{table}

\begin{table}[]
\centering
\begin{tabular}{|p{5mm}|p{5mm}|p{12mm}|p{10mm}|p{12mm}|p{10mm}|p{12mm}|p{10mm}|p{12mm}|p{10mm}|}
    \hline
  $dt$ & $\frac{1}{h}$ & $e_{\textbf{u}}$ & RoC & $e_c$& RoC & $e_p$ & RoC &Total & RoC\\ [1mm]
 \hline
$\frac{1}{10}$& 10 & 8.64$e^{-3}$ &  & 3.43$e^{-3}$ & & 1.58$e^{-1}$& &1.58$e^{-1}$ & \\[1mm]
$ \frac{1}{20}$&   20 & 6.33$e^{-3}$ & 0.449  & 1.71$e^{-3}$ & 1.006 & 8.32$e^{-2}$ & 0.927 & 8.35$e^{-2}$  & 0.925\\  [1mm]
$  \frac{1}{40}$&   40 & 3.98$e^{-3}$ & 0.668  & 8.85$e^{-4}$ & 0.948 &  4.30$e^{-2}$ & 0.951 & 4.32$e^{-2}$  & 0.949 \\   [1mm]
$ \frac{1}{80}$ &  80 & 2.64$e^{-3}$ & 0.596  & 4.44$e^{-4}$ & 0.994 & 2.19$e^{-2}$& 0.973 & 2.21$e^{-2}$ & 0.968 \\ 
    \hline      
\end{tabular}
\caption{  Errors and order of convergences under $time$ $independent$ $ASGS$ method for $Re$=50000 and $m=0.5$ at $T=1$}
    \end{table}

\begin{table}[]
\centering
\begin{tabular}{|p{5mm}|p{5mm}|p{12mm}|p{10mm}|p{12mm}|p{10mm}|p{12mm}|p{10mm}|p{12mm}|p{10mm}|}
    \hline
  $dt$ & $\frac{1}{h}$ & $e_{\textbf{u}}$ & RoC & $e_c$& RoC & $e_p$ & RoC &Total & RoC\\ [1mm]
 \hline
$\frac{1}{10}$& 10 & 5.67$e^{-3}$ &  & 2.42$e^{-3}$ & & 1.58$e^{-1}$& &1.58$e^{-1}$ & \\[1mm]
$ \frac{1}{20}$&   20 & 3.29$e^{-3}$ & 0.782  & 1.31$e^{-3}$ & 0.887 & 8.32$e^{-2}$ & 0.927 & 8.33$e^{-2}$  &0.927\\  [1mm]
$  \frac{1}{40}$&   40 & 1.86$e^{-3}$ & 0.828  & 6.95$e^{-4}$ & 0.912 &  4.30$e^{-2}$ & 0.951& 4.31$e^{-2}$  &0.951 \\   [1mm]
$ \frac{1}{80}$ &  80 & 1.01$e^{-3}$ & 0.880  & 3.47$e^{-4}$ & 1.003 &2.19$e^{-2}$& 0.973 & 2.19$e^{-2}$ &0.973\\ 
    \hline      
\end{tabular}
\caption{ Errors and order of convergences under $time$ $dependent$ $ASGS$ method for $Re$=50000 and $m=0.5$ at $T=1$}
    \end{table}

 \begin{table}[]
\centering
\begin{tabular}{|p{5mm}|p{5mm}|p{12mm}|p{10mm}|p{12mm}|p{10mm}|p{12mm}|p{10mm}|p{12mm}|p{10mm}|}
    \hline
  $dt$ & $\frac{1}{h}$ & $e_{\textbf{u}}$ & RoC & $e_c$& RoC & $e_p$ & RoC &Total & RoC\\ [1mm]
 \hline
$\frac{1}{10}$& 10 & 4.13$e^{-3}$ &  & 2.13$e^{-3}$ & & 1.60$e^{-1}$& &1.60$e^{-1}$ & \\[1mm]
$ \frac{1}{20}$&   20 & 2.15$e^{-3}$ & 1.366  & 1.08$e^{-3}$ & 0.984 & 8.34$e^{-2}$ & 0.964 & 8.34$e^{-2}$  & 0.942\\  [1mm]
$  \frac{1}{40}$&   40 & 1.27$e^{-3}$ & 1.137  & 5.83$e^{-4}$ & 0.887 &  4.31$e^{-2}$ & 0.958 & 4.31$e^{-2}$  &0.953 \\   [1mm]
$ \frac{1}{80}$ &  80 & 6.11$e^{-4}$ & 1.286  & 3.17$e^{-4}$ & 0.877 & 2.19$e^{-2}$& 0.979 & 2.19$e^{-2}$ &0.974\\ 
    \hline      
\end{tabular}
\caption{ Errors and order of convergences under $time$ $dependent$ $ASGS$ method for variable viscosity and diffusion coefficients for $m=1.5$ at $T=1$}
    \end{table}  
 
 \begin{table}[]
\centering
\begin{tabular}{|p{5mm}|p{5mm}|p{12mm}|p{10mm}|p{12mm}|p{10mm}|p{12mm}|p{10mm}|p{12mm}|p{10mm}|}
    \hline
  $dt$ & $\frac{1}{h}$ & $e_{\textbf{u}}$ & RoC & $e_c$& RoC & $e_p$ & RoC &Total & RoC\\ [1mm]
 \hline
$\frac{1}{10}$& 10 & 3.95$e^{-3}$ &  & 2.13$e^{-3}$ & & 1.61$e^{-1}$& &1.61$e^{-1}$ & \\[1mm]
$ \frac{1}{20}$&   20 & 1.74$e^{-3}$ & 1.184  & 1.08$e^{-3}$ & 0.984 & 8.35$e^{-2}$ & 0.943 & 8.35$e^{-2}$  & 0.943\\  [1mm]
$  \frac{1}{40}$&   40 & 8.68$e^{-4}$ & 1.004  & 5.83$e^{-4}$ & 0.887 &  4.31$e^{-2}$ & 0.953 & 4.31$e^{-2}$  &0.953 \\   [1mm]
$ \frac{1}{80}$ &  80 & 3.68$e^{-4}$ & 1.237  & 3.17$e^{-4}$ & 0.877 & 2.19$e^{-2}$& 0.974 & 2.19$e^{-2}$ &0.975\\ 
    \hline      
\end{tabular}
\caption{ Errors and order of convergences under $time$ $dependent$ $ASGS$ method for variable viscosity and diffusion coefficients for $m=1.0$ at $T=1$}
    \end{table}  

\begin{table}[]
\centering
\begin{tabular}{|p{5mm}|p{5mm}|p{12mm}|p{10mm}|p{12mm}|p{10mm}|p{12mm}|p{10mm}|p{12mm}|p{10mm}|}
    \hline
  $dt$ & $\frac{1}{h}$ & $e_{\textbf{u}}$ & RoC & $e_c$& RoC & $e_p$ & RoC &Total & RoC\\ [1mm]
 \hline
$\frac{1}{10}$& 10 & 3.77$e^{-3}$ &  & 2.13$e^{-3}$ & & 1.64$e^{-1}$& &1.64$e^{-1}$ & \\[1mm]
$ \frac{1}{20}$&   20 & 1.46$e^{-3}$ & 1.366  & 1.08$e^{-3}$ & 0.984 & 8.42$e^{-2}$ & 0.964 & 8.42$e^{-2}$  & 0.964\\  [1mm]
$  \frac{1}{40}$&   40 & 6.65$e^{-4}$ & 1.137  & 5.83$e^{-4}$ & 0.887 &  4.33$e^{-2}$ & 0.958 & 4.34$e^{-2}$  &0.958 \\   [1mm]
$ \frac{1}{80}$ &  80 & 2.73$e^{-4}$ & 1.286  & 3.17$e^{-4}$ & 0.877 & 2.20$e^{-2}$& 0.979 & 2.20$e^{-2}$ &0.979\\ 
    \hline      
\end{tabular}
\caption{ Errors and order of convergences under $time$ $dependent$ $ASGS$ method for variable viscosity and diffusion coefficients for $m=0.5$ at $T=1$}
    \end{table}

\section{Conclusion}
The detailed discussions on the stability and convergence properties of time dependent $ASGS$ formulation for the mathematical model illustrating the transportation of non-Newtonian fluid obeying power-law provide sufficiently comprehensible idea about the stabilized finite element method. Considering dynamic subscales makes the method more consistent and its accuracy in approximating the solution is proven to be the finest one among other stabilized methods. Another consideration of concentration dependent viscosity coefficients transforms the coupled system into a more general one which is applicable in different fields of studies. Carrying out separate derivations for different power law indices designating the crucial shear thinning and shear thickening properties of the non-Newtonian fluid during the error estimates provides a significant concept about handling these terms in further studies of adaptivity. Besides the excellent numerical performance of time dependent $ASGS$ method for different ranges of Reynolds numbers, specially for very high $Re$ makes it more acceptable in application point of view.

\end{document}